\newcommand{\rr}{\ensuremath{\mathbb{R}}}
\theoremstyle{plain}
\newtheorem{thm}{Theorem}[section]
\newtheorem{lem}[thm]{Lemma}
\newtheorem{lemma}[thm]{Lemma}
\newtheorem*{thm1.1}{Theorem 1.1}
\newtheorem*{thm1.4}{Theorem 1.4}
\newtheorem*{thm1.5}{Theorem 1.5}
\newtheorem*{thm1.6}{Theorem 1.6}
\newtheorem{example}[thm]{Example}
\newtheorem{prop}[thm]{Proposition}
\theoremstyle{definition}
\newtheorem{defn}[thm]{Definition}
\theoremstyle{remark}
\newtheorem{rem}[thm]{Remark}
\numberwithin{equation}{section}
\newtheorem{remark}{Remark}
\title{Morse structures on open books}
\author[D. Gay]{David T. Gay}\address{Euclid Lab, Athens, GA 30606}\address{Department of Mathematics,  University of Georgia, Athens, GA 30602}\email{d.gay@euclidlab.org}
\author[J.Licata]{Joan E. Licata}\address{Mathematical Sciences Institute, The Australian National University}\email{joan.licata@anu.edu.au}
\begin{document}

\begin{abstract} 
We use parameterized Morse theory on the pages of an open book decomposition  to  efficiently encode the contact topology in terms of a  labelled graph on a disjoint union of tori (one per binding component).  This construction allows us to generalize the  notion of the front projection of a Legendrian knot from the standard contact $\mathbb{R}^3$ to arbitrary closed contact $3$-manifolds. We describe a complete set of moves on such front diagrams, extending the standard Legendrian Reidemeister moves, and we give a combinatorial formula to compute the Thurston-Bennequin number of a nullhomologous Legendrian knot from its front projection.
\end{abstract}

\maketitle

\section{Introduction}

Every contact $3$-manifold is locally contactomorphic to the standard contact $(\mathbb{R}^3, \xi_{\mathrm{std}} = \ker (dz + x\; dy))$, but this fact does not necessarily produce large charts that cover the manifold efficiently. This paper uses an open  book decomposition of a contact manifold to produce a particularly efficient collection of such contactomorphisms, together with simple combinatorial data describing how  to reconstruct the contact $3$-manifold from these charts.   This data is recorded in a \textit{Morse diagram}, and we use this perspective to define {\em front projections} for Legendrian knots and links in arbitrary contact $3$-manifolds. Our main tool is parameterized Morse theory on the pages of open books, viewed as Weinstein manifolds.  We now give more precise statements, along with a minimal set of definitions.

Let $(M,\xi)$ be an arbitrary closed contact $3$--manifold with supporting open book decomposition $(B,\pi)$, where $B$ is the binding and $\pi : M \setminus B \to S^1$ is the fibration.  Let $W = (0,\infty) \times S^1 \times S^1$ with coordinates $x \in (0,\infty)$, $y, z \in S^1$, and with contact structure $\xi_W = \ker (dz + x\; dy)$.

\begin{thm} \label{thm:contacto}
 There is a $2$-complex $\mathrm{Skel} \subset M$ with the property that, after modifying $\xi$ by an isotopy through contact structures supported by $(B,\pi)$, the complement $(M \setminus (\mathrm{Skel} \cup B), \xi)$ is contactomorphic to a disjoint union of $n=|B|$ copies of $(W,\xi_W)$. \end{thm}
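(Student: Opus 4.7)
The plan is to combine Giroux's correspondence for open-book-supported contact forms with parameterized Weinstein theory on the pages. Using Giroux--Thurston--Winkelnkemper, I would first isotope $\xi$ through contact structures supported by $(B,\pi)$ to $\ker\alpha$ for a contact $1$-form $\alpha$ satisfying: (a) $d\alpha$ restricts to a symplectic form on each page $P_\theta := \overline{\pi^{-1}(\theta)}$; (b) $\lambda_\theta := \alpha|_{P_\theta}$ is a Liouville primitive making $P_\theta$ a Weinstein surface with outward boundary $\partial P_\theta \subset B$; and (c) near each binding component $B_i$, $\alpha$ already agrees with the model $dz + x\,dy$ in standard tubular-neighborhood coordinates identifying a punctured collar of $B_i$ with an open subset $\{0 < x < \varepsilon\} \subset W$.

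\textbf{Building $\mathrm{Skel}$.} For each $\theta$, the Liouville vector field $X_\theta$ dual to $\lambda_\theta$ has Weinstein skeleton $L_\theta \subset P_\theta$, namely the $1$-complex consisting of index-$0$ critical points and the stable arcs of the index-$1$ saddles, equivalently the set of points in $P_\theta$ that do not flow outward to $\partial P_\theta$. After a further small isotopy of $\alpha$ inside the supported class, the family $\{\lambda_\theta\}_{\theta\in S^1}$ will be in general position with respect to parameterized Morse theory, so only the two standard codimension-$1$ bifurcations occur: birth-deaths of canceling critical pairs and handle slides among stable arcs. The union
\[\mathrm{Skel} := \bigcup_{\theta\in S^1} L_\theta \subset M\]
is then a $2$-complex with standard local models at the bifurcation values of $\theta$.

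\textbf{Identifying the complement.} For each binding component $B_i$, let $U_i$ be the connected component of $M \setminus (\mathrm{Skel}\cup B)$ whose closure meets $B_i$; the construction of $\mathrm{Skel}$ guarantees there are exactly $n=|B|$ such components. For each $\theta$, the reverse Liouville flow of $X_\theta$ starting from a collar of $B_i$ sweeps out all of $P_\theta \cap U_i$ and gives a diffeomorphism $P_\theta\cap U_i \cong (0,\infty)_x\times S^1_y$ sending $\lambda_\theta$ to $x\,dy$, where $y$ parameterizes $B_i$. Assembling these page-by-page identifications over $\theta\in S^1$ and writing $z=\theta$ produces a diffeomorphism $U_i \cong (0,\infty)_x\times S^1_y\times S^1_z$, under which $\alpha$ pulls back to $dz + x\,dy$: the restriction to each page is $\lambda_\theta = x\,dy$ by construction, and the transverse direction is determined by the Reeb vector field, which the supported isotopy in Step~1 arranges to be $\partial_z$ throughout $U_i$.

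\textbf{Main obstacle.} The technical heart is the skeleton construction: rigorously assembling the family of Weinstein skeletons into a $2$-complex requires a parameterized-Morse analysis of generic codimension-$1$ bifurcations of Liouville skeletons on surfaces and verification that births/deaths and handle slides produce only standard $2$-complex singularities. A secondary obstacle is arranging, within the flexibility of contact forms supported by $(B,\pi)$, that the Reeb of $\alpha$ be tangent everywhere to the page-parameter direction (not merely transverse to pages); this finer normalization is precisely what promotes the page-by-page diffeomorphism of the third step to a global contactomorphism $U_i \cong W$, and it is where the ``isotopy through contact structures supported by $(B,\pi)$'' hypothesis is essential.
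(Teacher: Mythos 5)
Your overall plan coincides with the paper's: put a parameterized Weinstein structure on the pages, let $\mathrm{Skel}$ be the union over $\theta$ of the page-wise non-escaping sets, and use the Liouville flow out of a binding collar to identify each complementary component with $W$ and to carry the page primitive to $x\,dy$. The genuine gap is in your Step 1, which quietly assumes the paper's main technical content. It is not true that a Giroux--Thurston--Winkelnkemper form, or a generic small perturbation of a supported form, has page restrictions $\lambda_\theta$ whose dual Liouville fields are gradient-like for Morse functions fitting into a good family over $S^1$. Gradient-likeness is not a genericity/transversality property of Liouville fields: on an annulus page with area form $dr\wedge d\varphi$, the field $f(r)\partial_\varphi+(r-c)\partial_r$ is Liouville, outwardly transverse to the boundary, has no zeros, and its non-escaping set is the invariant circle $\{r=c\}$; this behavior is robust under perturbation, so no ``further small isotopy'' produces a Weinstein (Morse) family. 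Moreover the family must close up under the monodromy while each $V_t$ remains Liouville for a fixed page area form and the Morse data remains under control; that is a construction, not a general-position statement. This is precisely what the paper does in Proposition~\ref{prop:contmob} and Lemma~\ref{lem:WeinsteinHomotopy}: it builds the contact form from scratch on the mapping torus, realizing the monodromy as Dehn twists by an explicit Weinstein homotopy with only index~$1$ critical points and no births or deaths (via \cite{GK} and the Cieliebak--Eliashberg lemmas on reordering and handle slides). Your allowance of birth-death bifurcations is a further departure; it is perhaps not fatal for this one theorem, but it is excluded by the paper's notion of Morse structure and is needed later.

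There is a second hole in the final verification. You pin down the $dz$-component of the pulled-back form by saying the supported isotopy ``arranges the Reeb field to be $\partial_z$ throughout $U_i$.'' If $\partial_z$ means the page-parameter (monodromy) direction, then $\iota_R\,d\alpha=0$ along that direction forces the family $\lambda_\theta$ to be $\theta$-independent, i.e.\ a monodromy representative preserving the Liouville form exactly and a family with no handle slides --- incompatible with a general open book and with the parameterized structure you just built. If instead $\partial_z$ means the coordinate produced by the flow identification, the statement is equivalent to what you are trying to prove and cannot be arranged in advance. The paper needs only the weaker normalization that some monodromy vector field $X$ satisfies $\alpha(X)=1$ ($X$ is transverse to $\xi$ but is not the Reeb field), and it combines this with the Liouville-flow conformality argument (which you do have) to conclude $\Psi_*\alpha=dz+x\,dy$. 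As written, your proposal has no substitute for that step.
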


We construct $\text{Skel}$ by equipping an ordinary open book $(B, \pi)$ with a certain pair $(F,V)$, where  $F$ is a real-valued function on $M$ and  $V$ a vector field on $M \setminus B$.  We call such a pair  a \textit{Morse structure}. The precise definition is given in Section~\ref{sec:mstr}, but we indicate the flavor of this object here.

\begin{defn} \label{def:morsesmale}
 An {\em efficient Morse-Smale pair} on a surface $\Sigma$ is a pair $(f,V)$ satisfying the following properties:
 \begin{itemize}
 \item $f$ is a Morse function with  one index $0$ critical point, finitely many index $1$ critical points, and no index $2$ critical points;  
 \item $V$ is gradient-like for $f$, such that ascending and descending manifolds intersect transversely in level sets. 
 \end{itemize} An {\em efficient Morse-Smale homotopy} is a $1$--parameter family $(f_t,V_t)$ such that $f_t$ is Morse for all $t$, $V_t$ is gradient-like for $f_t$ for all $t$, and $(f_t,V_t)$ is an efficient Morse-Smale pair for all but finitely many values of $t$, when handle slides occur. 
\end{defn}

Note that for an efficient Morse-Smale pair, $\Sigma$ cannot be a closed surface, and descending manifolds for index $1$ critical points all flow to the index $0$ critical point. Furthermore, in an efficient Morse-Smale homotopy,   there are no births or deaths of cancelling pairs of critical points.

A Morse structure $(F,V)$ compatible with $(M,B, \pi)$,  has two key features, as well as some technical conditions which are stated precisely in Definition~\ref{def:ms}.  First,  the restriction of a Morse structure $(F,V)$ to a single page $\Sigma_t$ is an efficient Morse-Smale pair for all but finitely many values of $t$, and  the fibration parameter $t$ determines  an efficient Morse-Smale homotopy $(f_t, V_t)$.

Second, on each page, $V_{t}$ is required to be Liouville for a symplectic form on $\Sigma_t$.   Flow along $V_t$ in the complement of $\text{Skel}$  produces the contactomorphism claimed in Theorem~\ref{thm:contacto}, and the $2$-complex $\mathrm{Skel}$ (the skeleton) referred to in Theorem~\ref{thm:contacto} is the union  over all pages of all critical points and their descending manifolds.

   \begin{figure}%[h]
\begin{center}
\scalebox{.55}{\includegraphics{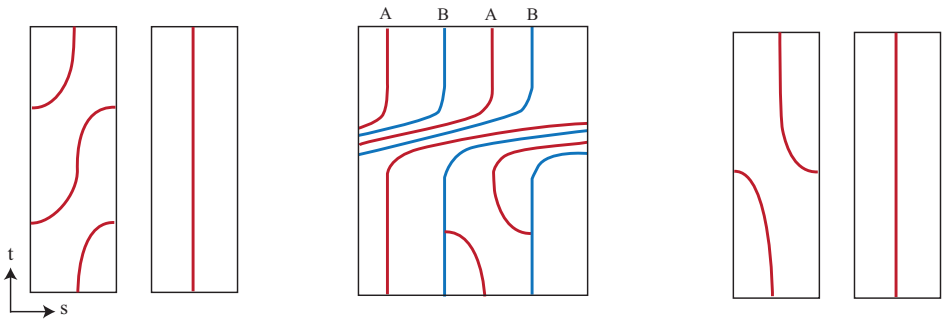}}
\caption{ Three examples of Morse diagrams.  Left: $L(2,1)$ with the universally tight contact structure. Center: An open  book with punctured torus pages with monodromy a product of a negative Dehn twist around  a non-separating curve followed by a boundary parallel positive Dehn twist.  Right: An overtwisted $S^3$. }\label{fig:ex1}
\end{center}
\end{figure}

A Morse structure similarly determines a  co-skeleton in $M$, the union of the index $1$ ascending manifolds on each page.  This $2$--complex $\text{Coskel}$ intersects the boundary of a regular neighborhood of the binding in a trivalent graph $\Gamma$, and the isotopy type of this graph on the parameterized tori determines the original contact open book $(M,\xi, B, \pi)$ as a  compactification of $\amalg^n (W,\xi_W)$,   up to diffeomorphism.    We call this collection of decorated tori a \textit{Morse diagram}.  At all but finitely many $t$ values, the Morse diagram is decorated with $2k$ paired points which trace out paired curves as $t$ varies.  At a value $t_0$ corresponding to a handle slide --- which we will call a \textit{handle slide $t$-value} --- the Morse diagram has $2k-2$ ordinary points and $2$ double points.  The double points are endpoints of a single edge with a discontinuity at $t_0$ value; as $t\rightarrow t_0^+$, the edge approach one edge in a different pair from the left (respectively, right), and as $t\rightarrow t_0^-$, the edge approaches the paired edge from the right (left). See the central picture in Figure~\ref{fig:ex1} for an example of a handle slide on a Morse diagram.

 We will see in Section~\ref{sec:contacto} that up to diffeomorphism, the Morse diagram coming from a Morse open book  uniquely determines $(M,\xi, B, \pi)$  as a compactification of $\amalg^n (W,\xi_W)$.  Furthermore,  any Morse diagram is the compactification data for some $(M,\xi,B,\pi)$.

In addition to combinatorially encoding the contact manifold, the Morse diagram functions as a target for defining the front projection of a Legendrian link in an open book with a Morse structure.

\begin{defn}\label{def:front}
 A {\em front} on a Morse diagram $\Gamma \subset \amalg^n S^1 \times S^1$ is a collection of arcs and closed curves $\mathcal{F}$ immersed, with semicubical cusps, in $\amalg^n S^1 \times S^1$, satisfying the following properties:
 \begin{enumerate}
  \item The slopes at all interior points on $\mathcal{F}$ are negative (using coordinates $(s,t)$ on $S^1 \times S^1$ and measuring slope as $dt/ds$).
  \item The endpoints of arcs of $\mathcal{F}$ lie on the interiors of curves of $\Gamma$ and have slope $0$.
  \item Suppose that $e$ and $e'$ are two edges of $\Gamma$ with the same label. For every arc of $\mathcal{F}$ ending on $e$ at height $t$, approaching $e$ from the left (respectively, right), there is an arc of $\mathcal{F}$ ending on $e'$ at height $t$, approaching $e'$ from the right (left). 
 \end{enumerate}
\end{defn}
Figure~\ref{fig:ex2} shows some fronts on the Morse diagrams of Figure~\ref{fig:ex1}.

   \begin{figure}%[h]
\begin{center}
\scalebox{.55}{\includegraphics{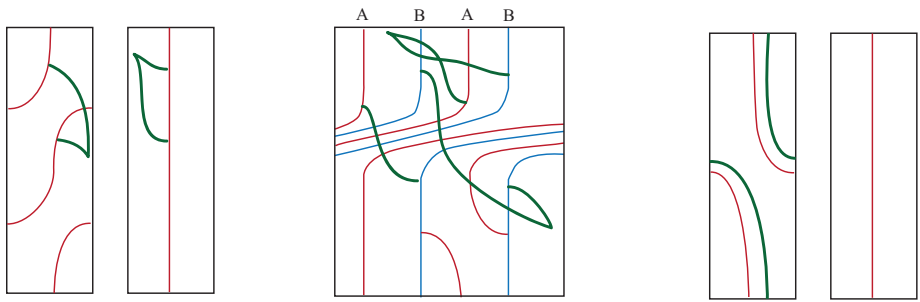}}
\caption{ The bold curves are front projections of Legendrian knots.  The right-hand example is the boundary of an overtwisted disc in $S^3$, while the other two examples represent non-trivial homology classes in the closed $3$-manifold.}\label{fig:ex2}
\end{center}
\end{figure}

\begin{thm} \label{thm:fronts}
 Let $\Lambda$ be a Legendrian link in $(M,\xi)$ that is disjoint from the binding and  transverse to $\mathrm{Skel}$. Then the image of $\Lambda$ under the flow by $\pm V$ to $\amalg^n  S^1 \times S^1$  is a front on the Morse diagram. Furthermore, any front on this Morse diagram is the image of such a Legendrian $\Lambda$, and any two Legendrians with the same front are equal.
\end{thm}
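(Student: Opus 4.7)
The plan is to reduce everything to the standard chart $(W,\xi_W)$ provided by Theorem~\ref{thm:contacto}, in which the projection $(x,y,z)\mapsto (y,z)$ is the familiar front projection for $dz+x\,dy$ and the flow of $\pm V$ simply moves $x$ while preserving $(y,z)$. Under this identification the image of $\Lambda$ in $\amalg^n S^1\times S^1$ is just the union of the $(y,z)$-shadows of the arcs of $\Lambda$ in each copy of $W$, so the three assertions reduce to the standard Legendrian/front correspondence in $(W,\xi_W)$ together with a bookkeeping step matching the combinatorics of $\Gamma$ with the gluing across $\mathrm{Skel}$.

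For the forward direction, the hypothesis that $\Lambda$ is disjoint from $B$ and meets $\mathrm{Skel}$ transversely in finitely many points splits $\Lambda$ into finitely many open arcs, each embedded in a single copy of $W$. On each such arc the Legendrian condition $dz=-x\,dy$ forces $dz/dy=-x<0$, giving~(1), and generic critical points of $y$ produce semicubical cusps by the standard computation. Near an endpoint of an arc $\Lambda$ approaches $\mathrm{Skel}$ along the end of $W$ where $x\to 0$, so the slope $-x$ tends to $0$ and the $(y,z)$-limit lies on one of the paired edges of $\Gamma$ --- namely the edge traced by the exit on $\partial N(B_i)$ of the ascending manifold belonging to the saddle whose descending manifold $\Lambda$ is crossing. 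This gives~(2). The companion arc of $\Lambda$ on the opposite side of the $\mathrm{Skel}$-crossing has its endpoint on the paired edge, with opposite (left vs.\ right) approach dictated by the Liouville flow on either side of the descending manifold; this is~(3).

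Surjectivity and injectivity are both arc-local in each copy of $W$. Given a front $\mathcal{F}$, I would lift it arc-by-arc by the unique prescription $x:=-dz/dy$, which is positive by~(1) and extends smoothly across semicubical cusps by the standard recipe. Endpoints of $\mathcal{F}$ at slope $0$ on $\Gamma$ produce lifts approaching the $\mathrm{Skel}$ end of $W$ at the saddle specified by the chosen edge of $\Gamma$, and condition~(3) supplies exactly the matching data needed to glue these lifts across $\mathrm{Skel}$ into a single Legendrian transverse to $\mathrm{Skel}$ and disjoint from $B$, recovering $\Lambda$. Injectivity is immediate from $x=-dz/dy$: two Legendrians with the same front have identical $(x,y,z)$ on the interior of each arc, and so coincide on each arc and, by continuity, globally.

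The step that I expect to require the most care is verifying that condition~(3), together with its slightly degenerate behavior at handle-slide $t$-values, is precisely the combinatorial translation of smooth Legendrian gluing across $\mathrm{Skel}$. Concretely, one must pin down that the paired-edge labelling of $\Gamma$ records the two exits on $\partial N(B_i)$ of each index-$1$ ascending manifold with the correct left/right orientation convention, and check that the transient double-point behavior of $\Gamma$ at a handle slide still produces a Legendrian lift with continuous $x$-coordinate across the slide moment. Once this bookkeeping is in place, everything else reduces to the universal front picture in $(W,\xi_W)$.
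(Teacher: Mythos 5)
Your proposal is correct and follows essentially the same route as the paper: Proposition~\ref{prop:frontproj} and the proof of Theorem~\ref{thm:fronts} likewise reduce everything to the contactomorphism with $(W,\xi_W)$ from Theorem~\ref{thm:contacto}, treat the finitely many $\mathrm{Skel}$-crossings as teleports whose endpoints flow to the two ends of the dual co-core (giving the paired slope-$0$ endpoints), and recover or construct $\Lambda$ arc-by-arc via $x=-dz/dy$ (flowing back by $\mp V$ for a time determined by the slope). The only difference is cosmetic: the paper imposes the genericity (including avoiding handle-slide $t$-values) by a small Legendrian perturbation up front, which disposes of the handle-slide bookkeeping you flag as the delicate step.
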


In Section~\ref{sec:isotopymoves} we describe a list of moves on fronts, which we call {\em isotopy moves}, and we show the following:

\begin{thm}\label{thm:frontmoves}
 In the setting of Theorem~\ref{thm:fronts}, two Legendrian links in $(M,\xi)$ are Legendrian isotopic if and only if their fronts are related by a sequence of isotopy moves.
\end{thm}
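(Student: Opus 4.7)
My plan is to follow the standard Reidemeister-type template, adapted to the enhanced front calculus on a Morse diagram. For the ``if'' direction, I would verify each isotopy move in Section~\ref{sec:isotopymoves} individually: I would exhibit an explicit one-parameter family of diagrams interpolating between the two sides of the move, with every intermediate diagram either a front in the sense of Definition~\ref{def:front} or exhibiting the single codimension-1 degeneracy characterizing that move. Theorem~\ref{thm:fronts} then lifts this family to a continuous family of Legendrian links, and the smoothness of the defining local model ensures that this family is a genuine Legendrian isotopy.

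For the ``only if'' direction, I start with a Legendrian isotopy $\{\Lambda_s\}_{s\in[0,1]}$ joining the given links, keeping the Morse structure $(F,V)$ fixed throughout. After a $C^\infty$-small perturbation of the family with endpoints fixed, I would arrange generic transversality: for all but finitely many $s$, the link $\Lambda_s$ is disjoint from $B$, transverse to $\mathrm{Skel}$, and its projection to the Morse diagram exhibits only transverse double points, semicubical cusps, and arc endpoints on the interiors of edges of $\Gamma$, lying strictly away from vertices of $\Gamma$ and from handle-slide $t$-values. The existence of such a perturbation follows from a multijet transversality argument applied to the space of Legendrian immersions equipped with the relevant incidence data relative to $\mathrm{Skel}$, $B$, and the singular strata of $\Gamma$.

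At the finitely many remaining values of $s$, exactly one codimension-1 degeneracy occurs. I would catalogue these into three families and match each to a move in Section~\ref{sec:isotopymoves}: (i) \emph{interior events} occurring away from $\Gamma$ and away from handle-slide $t$-values, producing the familiar front-projection Reidemeister phenomena (triple point, self-tangency, cusp-crossing); (ii) \emph{boundary events} on $\Gamma$, in which arc endpoints cross a vertex of $\Gamma$, collide with each other, or meet a cusp or double point migrating to $\Gamma$; and (iii) \emph{handle-slide events} at $t$-values where the paired structure of $\Gamma$ jumps discontinuously, and an arc endpoint passing through the discontinuity must be reconfigured compatibly with the jump in the pairing. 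Local models near each event will be computed in the Darboux chart $(W,\xi_W)$ provided by Theorem~\ref{thm:contacto}, confirming that the corresponding move accurately describes the change in the front.

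The hard part will be the completeness check for the move list, particularly in family (iii) and at vertices of $\Gamma$ where several edges meet. There, the interaction between the Legendrian and the Morse homotopy requires careful local bookkeeping: I will need to verify that no further codimension-1 phenomena arise from simultaneous interactions between arc endpoints, paired edges, and the changing combinatorics of handle slides, and that each enumerated move corresponds to a unique local model rather than to an incomplete or overcounted configuration. I also expect some care to be needed in ruling out, by a further generic perturbation, any transient excursion of $\Lambda_s$ into a neighborhood of $B$ that is not already subsumed by a move of type (ii). Once the case analysis is complete, concatenating the moves occurring at the non-generic parameter values will yield the desired sequence of isotopy moves relating the two fronts.
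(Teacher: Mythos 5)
Your overall strategy coincides with the paper's: fix the Morse structure, perturb the Legendrian isotopy to be generic, catalogue the codimension-one events, and analyze each in the model coordinates supplied by the contactomorphism of Theorem~\ref{thm:contacto}. The gap is in the catalogue itself, which is the actual substance of the theorem. The paper organizes the events by which genericity condition on $\Lambda_s$ \emph{in $M$} fails (transversality to the $2$--cells of $\mathrm{Skel}$ and $\mathrm{Coskel}$, disjointness from $B$, disjointness from the critical loci, avoidance of handle-slide $t$-values), and several of the resulting moves are \emph{not} local degenerations of the diagram, so your taxonomy (i)--(iii), which is organized purely by local models of diagram events, cannot capture them. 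Concretely: a generic Legendrian isotopy must be allowed to cross the binding --- this cannot be removed by a further perturbation (it changes linking with $B$, and $B$ sits at $x=\infty$ in the coordinates of Theorem~\ref{thm:contacto}, nowhere near $\Gamma$ or $\mathrm{Skel}$) --- and it produces the non-local move B1, in which a nearly vertical segment of the front is replaced by its vertical complement joined by two cusps; your proposal to rule out excursions near $B$ by perturbation, or to subsume them into boundary events on $\Gamma$, fails at exactly this point. Likewise, crossings of the transverse knots swept out by the index $0$ and index $1$ critical points give the moves K2 and K3; K2 is again non-local (the new arc teleports across \emph{every} pair of trace curves), and neither is an ``arc endpoint crossing a vertex of $\Gamma$.''

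A second mismatch concerns the handle-slide events: at a handle-slide $t$-value the flow lines of $V_t$ jump discontinuously, so every strand of $\Lambda$ meeting the degenerate core/co-core configuration on that page is affected, not only strands whose front endpoints sit at the discontinuity in the pairing; this is what produces the three moves H1--H3 in Figure~\ref{fig:moves}, which the paper derives by comparing intersections with the characteristic foliation before and after the slide. Finally, for the ``if'' direction, invoking Theorem~\ref{thm:fronts} to lift a one-parameter family of diagrams does not work at the degenerate instant of moves such as B1, K2 or K3, where the intermediate picture is not a front (e.g.\ the slope passes through the vertical); the paper instead realizes each move by an explicit geometric isotopy (translations in $s$ and $t$, standard neighborhoods of the transverse knots $B$ and the critical loci, and a continuity argument for K3). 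Without reorganizing your case analysis around the failure of the genericity conditions of Proposition~\ref{prop:frontproj} in the $3$--manifold, the ``completeness check'' you defer cannot be completed, so as written the argument does not establish the move list.
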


In Section~\ref{sec:tb}, we show how to detect whether or not a front is the front projection of a nullhomologous Legendrian knot; if it is, we show how to construct Seifert surfaces for such fronts, define the {\em total writhe} $W$ of a front $\mathcal{F}(\Lambda)$ and then prove the following:

\begin{thm}\label{thm:tb}
  If $\Lambda$ is a nullhomologous Legendrian knot, then the Thurston-Bennequin number $\mathrm{tb}(\Lambda)$ is equal to $W(\mathcal{F}(\Lambda)) - \frac{1}{2} |\mathrm{cusps}|$.
\end{thm}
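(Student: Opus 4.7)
The plan is to realize $\mathrm{tb}(\Lambda)$ as the intersection number $\Sigma\cdot \Lambda'$, where $\Sigma$ is the Seifert surface constructed in Section~\ref{sec:tb} and $\Lambda'$ is a small Reeb pushoff of $\Lambda$. Using Theorem~\ref{thm:contacto}, I would work chart-by-chart in the standard pieces $(W,\xi_W) = ((0,\infty)\times S^1\times S^1, \ker(dz+x\,dy))$: there the Reeb field is $\partial_z$, so $\Lambda'$ is simply the translate of $\Lambda$ by a small amount in the $z$-direction, and the projection $(x,y,z)\mapsto(y,z)$ onto the Morse diagram torus is precisely a standard front projection in the sense of classical contact $\mathbb{R}^3$.

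Given this setup, I would organize the count $\Sigma\cdot \Lambda'$ by local features of $\mathcal{F}(\Lambda)$. At each transverse self-crossing of $\mathcal{F}$ in the interior of a chart, a local computation identical to the classical one shows that $\Lambda'$ meets $\Sigma$ in exactly one point with sign equal to the writhe sign of the crossing. At each semicubical cusp, the standard local cusp model contributes $-\frac{1}{2}$ to $\Sigma \cdot \Lambda'$, reflecting that two cusps joined by an arc create a single net intersection of $\Lambda'$ with $\Sigma$ in that region. Summed over the chart interiors, these contributions produce the cusp term $-\frac{1}{2}|\mathrm{cusps}|$ together with a local writhe term.

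It then remains to account for the places where $\Lambda$ crosses $\mathrm{Skel}$, i.e.\ the endpoints of $\mathcal{F}$ on edges of $\Gamma$, and to confirm that the chart-interior crossings together with any crossings arising from the edge pairings assemble into the total writhe $W(\mathcal{F}(\Lambda))$. At an endpoint on $\Gamma$, Definition~\ref{def:front} forces slope $0$ together with a matching endpoint at equal height on the partner edge approached from the opposite side, so $\Lambda'$ exits and re-enters adjacent charts consistently along $\Gamma$ with no flip of the contact normal; with $\Sigma$ chosen transverse to $\mathrm{Skel}$ and compatible with the chart gluings, no new intersections appear near these endpoints. Meanwhile, the total writhe defined in Section~\ref{sec:tb} is designed to count both chart-interior crossings and the crossings appearing when arcs of $\mathcal{F}$ are continued through paired edges, which is exactly the global signed count of contributions produced by the pushoff.

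The principal obstacle is the bookkeeping at the skeleton: one must construct $\Sigma$ so that its chart-local behavior is standard enough for the cusp and crossing models to apply directly, and one must verify that the pairing rule on $\Gamma$ translates crossings of $\mathcal{F}$ across the identification into ordinary transverse intersections of $\Lambda'$ with $\Sigma$ with the sign that the combinatorial definition of $W$ assigns. Once these compatibilities are checked, the theorem reduces to summing local contributions, each of which is a classical computation of contact topology.
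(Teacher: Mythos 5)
Your overall strategy matches the paper's in outline: realize $\mathrm{tb}(\Lambda)$ as the signed intersection of a pushoff $\Lambda'$ with a Seifert surface, and evaluate self-crossings and cusps by the classical local models in the charts $(W,\xi_W)$, which does give one writhe-sign per crossing and the $-\frac{1}{2}|\mathrm{cusps}|$ term. But the substance of the theorem lies exactly in what you defer as ``bookkeeping at the skeleton,'' and that part is missing. The total writhe $W(\mathcal{F}(\Lambda))$ is not just the self-crossing count of the front: it contains the term $T$, a signed count of crossings of $\mathcal{F}(\Lambda)$ with the trace curves, weighted by multiplicities assigned to intervals of the trace curves (determined by the signed teleporting endpoints of the front, and in general by an auxiliary link $X$). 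Those weights are not an independent combinatorial decoration whose correctness can be taken on faith; they arise from the construction of the Seifert surface itself. The paper cuts $M$ along $\mathrm{Skel}$ to get $\widetilde{M}$, closes up the arcs of $\Lambda$ by oriented segments running in ribbon neighborhoods of the trace curves on $\partial\widetilde{M}$ --- with exactly $m_i$ parallel segments over the $i$-th interval, which is possible because the signed teleporting endpoints sum to zero on each component (Lemma~\ref{lem:intnumb}) --- adds closed curves near the transverse knot of index $0$ critical points to kill the winding number, applies Seifert's algorithm and glues in twisted bands, then maps back to $M$. It is precisely the sheets of this surface lying along $\partial\widetilde{M}$ that the pushoff $\Lambda'$ meets whenever $\mathcal{F}(\Lambda)$ crosses a trace curve, and that is where the weighted term $T$ comes from. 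Your assertion that ``no new intersections appear near these endpoints,'' combined with the statement that $W$ is ``designed'' to count the right thing, does not establish the identity $\Sigma\cdot\Lambda'=W(\mathcal{F}(\Lambda))-\frac{1}{2}|\mathrm{cusps}|$; without exhibiting the surface, the key equality is asserted rather than proved.

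A second genuine omission is the case where $\Lambda$ is nullhomologous in $M$ but $[\pi(\Lambda)]\neq 0\in H_1(\Sigma)$, so no Seifert surface exists within $\Sigma\times[0,1]$. The paper handles this by cutting a Seifert surface along $\Sigma_0\cup N(B)$, introducing the auxiliary link $X$ of Type 1 and Type 2 curves so that $\Lambda\cup X$ is nullhomologous in the cylinder, defining the multiplicities from $\mathcal{F}(\Lambda\cup X)$, proving that the resulting total writhe is independent of the choice of $X$ (Lemma~\ref{lem:writhe}), and checking that $\mathcal{F}(\Lambda')$ is disjoint from Type 1 curves and only under-crosses Type 2 curves, so $X$ enters the count only through the multiplicities. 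None of this appears in your proposal, so the theorem in the generality stated --- arbitrary nullhomologous $\Lambda$ --- is not reached by your argument.
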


\subsection{Acknowledgements}
This paper grew from discussions at the Banff International Research Station in 2013 which were continued there in 2014, and the authors are grateful for the hospitality and support of BIRS. The second author  appreciated the chance to visit the University of Georgia for continued work, and the first author is  partially supported by NSF grant DMS-1207721.

\section{Background and notation}\label{sec:background}
All our manifolds are oriented and connected unless otherwise stated, and contact structures are co-oriented.

As a matter of convenience we will sometimes identify $S^1$ with $\mathbb{R}/\mathbb{Z}$ and sometimes with $\mathbb{R}/2\pi\mathbb{Z}$; which we are using should be clear from context. We will use a Roman letter variable name, such as $t$, in the $\mathbb{R}/\mathbb{Z}$ case, and a Greek letter variable name, such as $\theta$, in the $\mathbb{R}/2\pi\mathbb{Z}$ case. We will use $(\rho,\mu,\lambda)$ for polar coordinates on the solid torus $\mathbb{R}^2 \times S^1$, with $\mu, \lambda \in \mathbb{R}/2\pi\mathbb{Z}$, and $(\rho,\mu)$ being standard polar coordinates on $\mathbb{R}^2$, i.e., $\mu$ for ``meridian'' and $\lambda$ for ``longitude''.

Here and throughout, suppose that $(B, \pi)$ is an open book decomposition of a closed connected oriented $3$-manifold $M$. That is, $B$ is an oriented link in $M$ and $\pi:M\setminus B\rightarrow S^1$ is a fibration with the property that for all $t\in S^1$, the closure of $\pi^{-1}(t)$ is a Seifert surface for $B$.  Each compact connected surface $\Sigma_t = B\cup \pi^{-1}(t)$ is a \textit{page} of the open book.

The utility of open books for the study of contact geometry comes from the following notion of compatibility between open book decompositions and contact structures on a fixed manifold: 

\begin{defn}\label{def:ob} A contact form $\alpha$ on $M$ is \textit{compatible} with $(B,\pi)$ if
\begin{itemize}
\item for all $t$, $d\alpha|_{\Sigma_t}$ is a symplectic form; and
\item $\alpha |_B > 0$, where $B$ is oriented as the boundary of any $\Sigma_t$, and $\Sigma_t$ is oriented by $d\alpha$.
\end{itemize}
A contact structure $\xi$ on $M$ is \textit{supported} by $(B,\pi)$ if there exists a contact form for $\xi$ which is compatible with $(B,\pi)$.
\end{defn}

When we refer to a $4$--tuple $(M,\xi,B,\pi)$, we always imply that $\xi$ is supported by $(B,\pi)$. Diffeomorphisms between such $4$-tuples are contactomorphisms respecting the open book structure.

\begin{thm}[Thurston-Winkelnkemper~\cite{TW}, Giroux~\cite{Gi}] \label{thm:obdcs}
Each open book supports a unique isotopy class of contact structures.
\end{thm}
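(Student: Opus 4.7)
The theorem has two separate parts, existence (Thurston--Winkelnkemper) and uniqueness (Giroux), and my plan is to treat them separately before combining via Gray stability.

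For existence, I would construct a compatible contact form in two pieces and glue them. Fix a page $\Sigma$ and the monodromy $\phi\colon\Sigma\to\Sigma$. The first step is to choose a Liouville form $\beta$ on $\Sigma$ (meaning $d\beta$ is symplectic) which equals $(1+s)\,d\theta$ in a collar of $\partial\Sigma=B$; by composing $\phi$ with an isotopy supported near $\partial\Sigma$, I would arrange $\phi^*\beta=\beta$ in the collar. On $\Sigma\times[0,1]$ define the family $\tilde\beta_t=(1-t)\beta+t\,\phi^*\beta$, which descends to a 1-form on the mapping torus. Set $\alpha=\tilde\beta_t+K\,dt$. A short computation, using that $\tilde\beta_t\wedge d_\Sigma\tilde\beta_t=0$ for dimensional reasons, yields
\[\alpha\wedge d\alpha \;=\; dt\wedge\bigl(K\,d_\Sigma\tilde\beta_t-\tilde\beta_t\wedge\partial_t\tilde\beta_t\bigr),\]
which is a positive volume form whenever $K$ is large enough that the positive symplectic term dominates. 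Near the binding, I would splice $\alpha$ smoothly with the standard rotationally symmetric model $d\lambda+\rho^2\,d\mu$ on a tubular neighborhood of $B$, using the collar form of $\beta$ to guarantee that the match is smooth and contact throughout.

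For uniqueness, given two compatible contact forms $\alpha_0$ and $\alpha_1$, the plan is to show that the convex path $\alpha_s=(1-s)\alpha_0+s\alpha_1$ remains a compatible contact form for every $s\in[0,1]$ and then invoke Gray stability. Compatibility is convex on the nose: $d\alpha_s|_{\Sigma_t}$ is a positive linear combination of positive area forms, and $\alpha_s|_B>0$ for the same reason. The contact condition requires expanding
\[\alpha_s\wedge d\alpha_s=(1-s)^2\alpha_0\wedge d\alpha_0+s^2\alpha_1\wedge d\alpha_1+s(1-s)\bigl(\alpha_0\wedge d\alpha_1+\alpha_1\wedge d\alpha_0\bigr),\]
and verifying that the total is a positive volume form. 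For this I would use the fact that a compatible contact form has its Reeb vector field positively transverse to pages in $M\setminus B$ and positively tangent to $B$; this implies $\alpha_i\wedge d\alpha_j>0$ pointwise for each $i,j\in\{0,1\}$. Once the path $\alpha_s$ is known to consist of contact forms, Gray's theorem produces an ambient isotopy carrying $\ker\alpha_0$ to $\ker\alpha_1$.

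The main obstacle I expect is the positivity of the cross terms in the uniqueness step. The symplectic and boundary conditions are trivially convex, but the three-dimensional volume condition is not obviously convex and genuinely requires the geometric content of compatibility. I would handle the verification in two regions: away from $B$, in local coordinates $(x,y,t)$ adapted to $\pi$, using the symplectic nondegeneracy of $d\alpha_i|_{\Sigma_t}$ to bound the mixed terms by the diagonal ones; and near $B$, by appealing to a standard neighborhood theorem for positively transverse binding components to put both $\alpha_0$ and $\alpha_1$ into a common normal form where the inequality becomes explicit.
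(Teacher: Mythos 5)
Your existence half is essentially the Thurston--Winkelnkemper construction that the paper simply cites (it offers no proof of Theorem~\ref{thm:obdcs}, and its Proposition~\ref{prop:contmob} reworks the same construction in Weinstein language), and that sketch is sound: the computation of $\alpha\wedge d\alpha$ for $\alpha=\tilde\beta_t+K\,dt$ is correct, the interpolation descends to the mapping torus after the routine cutoff in $t$, and the splice with $d\lambda+\rho^2\,d\mu$ near $B$ is the standard one (compare Lemma~\ref{lem:mscfcs}).

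The uniqueness half, however, has a genuine gap: the pointwise inequality $\alpha_i\wedge d\alpha_j>0$ does not follow from compatibility, and the naive convex combination of supported contact forms can fail to be contact. Evaluate on a basis $(v_1,v_2,R_1)$, where $v_1,v_2$ is an oriented basis of $T_p\Sigma_t$ and $R_1$ is the Reeb field of $\alpha_1$; since $\iota_{R_1}d\alpha_1=0$,
\[
\alpha_0\wedge d\alpha_1(v_1,v_2,R_1)=\alpha_0(R_1)\,d\alpha_1(v_1,v_2),
\]
so positivity of the cross term is \emph{equivalent} to $R_1$ being positively transverse to $\xi_0=\ker\alpha_0$. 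Compatibility only makes $R_1$ transverse to the pages, which says nothing about its position relative to the other contact plane field; establishing $R_1\pitchfork\xi_0$ for an arbitrary pair of supported forms is essentially the content of the theorem, so invoking it is circular. Moreover no pointwise bound of the mixed terms by the diagonal ones (your proposed fallback) can exist: in local coordinates with pages $\{t=\mathrm{const}\}$, the forms $\alpha_0=dx+(x-t)\,dy$ and $\alpha_1=-dx+(x+t)\,dy$ both satisfy $d\alpha_i|_{\mathrm{page}}=dx\wedge dy>0$ and $\alpha_i\wedge d\alpha_i=dx\wedge dy\wedge dt>0$, yet their average is $x\,dy$, which vanishes at the origin. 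The components of $\alpha_i$ and $d\alpha_i$ in the direction transverse to the pages are simply not constrained by compatibility. The standard repair (Giroux; see the write-ups by Etnyre or Ozbagci--Stipsicz) is not a straight line between $\alpha_0$ and $\alpha_1$: first deform each $\alpha_i$ through supported forms so that both agree with a standard model near $B$; then pick a $1$-form $\eta$ that equals the pullback of the angular form $d\pi$ away from $B$ (so $\eta$ vanishes on pages) and is suitably truncated over the binding neighborhoods, and observe that each path $\alpha_i+\tau K\eta$, $\tau\in[0,1]$, consists of supported contact forms, while $(1-s)\alpha_0+s\alpha_1+K\eta$ is contact for $K$ large because the term $K\,\eta\wedge\bigl((1-s)d\alpha_0+s\,d\alpha_1\bigr)$ is positive and dominates the bounded cross terms. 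Concatenating these paths and applying Gray stability (as you intended) then yields the isotopy.
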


\begin{defn}
Given a surface $\Sigma$ with boundary, the \textit{ mapping class group}  $\text{Mod}(\Sigma)$ is the group of orientation preserving self-diffeomorphisms of $\Sigma$ which are equal to the identity on $\partial \Sigma$, modulo isotopies fixing $\partial \Sigma$ pointwise.
\end{defn}

\begin{defn}
 A vector field $X$ on $M \setminus B$ is a {\em monodromy vector field} if $d\pi(X)=1$ and if, for each component of $B$, there are solid torus coordinates $(\rho,\mu,\lambda) \in D^2 \times S^1$ with respect to which $\pi = \mu$ and $X = \partial_\mu$.
\end{defn}

Note that monodromy vector fields exist (use partitions of unity), and that a monodromy vector field allows one to read off the monodromy of an open book as the return map on a page, and thus to identify $(M,B,\pi)$ with the {\em abstract open book} $M(\Sigma,\phi) = \Sigma \times [0,1]/\!\sim$, where $(p,1) \sim (\phi(p),0)$ for all $p \in \Sigma$ and $(p,s) \sim (p,t)$ for all $p \in \partial \Sigma$ and for all $s,t \in [0,1]$. In particular, this also allows us to identify every page $\Sigma_t$ with a fixed model page $\Sigma = \Sigma_0$. Furthermore, different monodromy vector fields produce isotopic return maps, so that the monodromy is well defined as an element of the mapping class group $\mathrm{Mod}(\Sigma)$.

\section{Morse structures and Morse diagrams}\label{sec:mstr}
In preparation for constructing Morse structures, we briefly depart from the world of contact geometry and consider open books as  topological    objects. 
\begin{defn} \label{def:ms}
 Let $M$ be a closed, connected, oriented $3$--manifold with an open book $(B,\pi)$. A {\em Morse structure} on $(M,B,\pi)$ is a pair $(F,V)$, where $F: M \to (-\infty,0]$ is a smooth function and $V$ is a smooth vector field on $M$ satisfying the following properties:
 \begin{enumerate}
  \item $F^{-1}(0)=B$.
  \item\label{note1} There is a solid torus neighborhood of each component of $B$, with coordinates $(\rho,\mu,\lambda)$, on which $B = \{\rho=0\}$, $\pi = \mu$, $F = -\rho^2$ and $V = - (\rho/2) \partial_\rho$. 
  \item\label{note2} On the interior of each page $\Sigma_t \setminus B$, the function $f_t: = F|_{\Sigma_t \setminus B}$ is Morse.  
   \item $V$ is tangent to each page $\Sigma_t$, and $V_t := V|_{\Sigma_t}$ is gradient--like for $f_t$.
  \item Using a monodromy vector field $X$ to identify each page $\Sigma_t$ with a fixed page $\Sigma$, the family $(f_t,V_t)$ on $\Sigma \setminus \partial \Sigma$ is an efficient Morse-Smale homotopy. 
 \end{enumerate}
 A {\em Morse open book} on $M$ is a $4$--tuple $(B,\pi,F,V)$ such that $(F,V)$ is a Morse structure on $(M,B,\pi)$.
\end{defn}

\begin{rem}
The factor of $1/2$ in (\ref{note1}) is not significant here, but becomes convenient when we bring the contact geometry back  to the story in Section~\ref{sec:morsecontact}.
\end{rem}

\begin{prop} \label{prop:obdms}
 Every open book decomposition has a Morse structure.
\end{prop}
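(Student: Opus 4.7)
The plan is to construct $(F,V)$ in two pieces: first on a tubular neighborhood of the binding using the prescribed normal form, and then on the complement by producing a consistent $S^1$-family of efficient Morse-Smale pairs on a model page that realizes the monodromy.

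Step one is to fix solid-torus coordinates $(\rho,\mu,\lambda)$ on a neighborhood $N(B_i)$ of each binding component with $\pi = \mu$, and set $F = -\rho^2$, $V = -(\rho/2)\partial_\rho$ on $N(B) = \bigsqcup_i N(B_i)$. This realizes conditions (1) and (2) of Definition~\ref{def:ms} and pins down the behavior of $(F,V)$ on the collar of each page.

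Next, fix a model page $\Sigma$ with a collar matching the binding normal form, and choose an efficient Morse-Smale pair $(f,V^\Sigma)$ on $\Sigma$ that restricts to this collar data. Such a pair exists because $\partial\Sigma \neq \emptyset$: $\Sigma$ admits a handle decomposition with a single $0$-handle, $1-\chi(\Sigma)$ many $1$-handles, and no $2$-handles, realized by a Morse function with the collar normal form together with a generic gradient-like vector field whose ascending and descending manifolds meet transversely in level sets. Now pick a monodromy vector field $X$ and identify the pages $\Sigma_t \subset M\setminus N(B)$ with $\Sigma$ via flowing along $X$; the resulting monodromy $\phi \in \mathrm{Mod}(\Sigma)$ is supported away from the collar (since $X = \partial_\mu$ there), so $(\phi^* f, \phi^* V^\Sigma)$ is again an efficient Morse-Smale pair agreeing with $(f,V^\Sigma)$ on the collar. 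The central step is to produce an efficient Morse-Smale homotopy $(f_t, V^\Sigma_t)_{t \in [0,1]}$ between these two pairs, fixed on the collar. Starting from a generic path in the space of pairs (Morse function, gradient-like vector field), one applies Cerf theory to arrange only handle slides and isolated birth-death moments, then removes each birth-death pair by local surgery in the family. Pulling the resulting family back to the pages via $X$-flow and gluing to the binding-neighborhood normal form along $\partial N(B)$ produces the required Morse structure.

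The main obstacle is the elimination of birth-death moments in the central step. Because the Euler characteristic forces all efficient pairs to have the same number of index-$1$ critical points, no net births or deaths are needed topologically, but eliminating cancelling birth-death pairs inside a $1$-parameter family requires a careful parameterized argument: one identifies a cancelling pair at its birth and death moments, constructs a locally supported isotopy that pushes the pair off of $[0,1]$ in the family, and verifies that the modification preserves the efficient Morse-Smale condition away from finitely many handle-slide times. The hypothesis $\partial\Sigma \neq \emptyset$ is essential here, since any transient index-$2$ critical point that would be produced by a birth can be cancelled against an existing index-$1$ critical point via a handle slide before the death occurs, keeping the family within the efficient regime throughout.
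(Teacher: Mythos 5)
Your overall skeleton is the same as the paper's: put the standard model $F=-\rho^2$, $V=-(\rho/2)\partial_\rho$ near the binding, reduce to producing an efficient Morse--Smale homotopy $(f_t,V_t)$ on a model page with $(f_1,V_1)=\phi^*(f_0,V_0)$ for a representative $\phi$ of the monodromy, and then assemble the mapping torus and glue. The genuine problem is in your treatment of the one step that actually carries the weight, the elimination of birth--death moments. Your proposed mechanism --- let a $(1,2)$ birth happen, then ``cancel the transient index-$2$ critical point against an existing index-$1$ critical point via a handle slide before the death occurs, keeping the family within the efficient regime throughout'' --- does not work and is internally inconsistent. A handle slide never changes the number of critical points; cancelling an index-$2$ point against an index-$1$ point is itself a death (a birth--death singularity of $f_t$), which is exactly what Definition~\ref{def:morsesmale} forbids, since an efficient Morse--Smale homotopy requires $f_t$ to be Morse for \emph{all} $t$. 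Moreover, during the whole interval between the birth and your proposed cancellation the function has an index-$2$ critical point (or an extra minimum, in the $(0,1)$ case), so $(f_t,V_t)$ fails to be an efficient pair on an entire interval of $t$-values, not just at finitely many handle-slide times. So the family you describe is not an efficient Morse--Smale homotopy at all; the transient definite critical points must never appear, not be killed quickly.

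There is also a smaller soft spot in the same step: your plan to ``identify a cancelling pair at its birth and death moments and push the pair off of $[0,1]$'' by a locally supported isotopy presumes that each born critical point dies against the same partner, which a generic Cerf graphic need not satisfy; eliminating extraneous minima and maxima from a path of functions requires global manipulation of the graphic. This is precisely what the paper outsources: it takes a generic path from $f_0$ to $f_1=\phi^*f_0$ and invokes the elimination of extraneous index-$0$ and index-$2$ critical points in one-parameter families (Theorems~3 and~4 of \cite{GK}), after which the path is Morse for all $t$ with the correct critical point profile, and standard Cerf theory then supplies the interpolating gradient-like family $V_t$ from $V_0$ to $V_1=\phi^*V_0$. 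If you replace your final paragraph by an appeal to that parameterized elimination result (or reprove it), the rest of your construction goes through as in the paper.
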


\begin{proof}
 We need only   show that, given a surface $\Sigma$ with the correct genus and number of boundary components and a mapping class $\Phi \in \text{Mod}(\Sigma)$, there exists an efficient Morse-Smale homotopy $(f_t,V_t)$ on $\Sigma$ with $\phi^*(f_0,V_0) = (f_1,V_1)$ for some representative $\phi$ of $\Phi$. Once we do this, the rest follows by constructing the mapping torus $M(\Sigma, \phi)$ and gluing in a solid torus neighborhood of the binding. (The initial $\Sigma$ ends up being the complement in the page $\Sigma_0$ of a collar neighborhood of $\partial \Sigma_0$.) 
 
First choose an initial Morse function $f_0$ and a representative $\phi$, and let $f_1=\phi^* f_0$. In~\cite{GK}, it is shown how to eliminate extraneous minima and maxima (index $0$ and $2$ critical points in this case) in a generic path connecting Morse functions; apply this to get $f_t$. Then standard Cerf theory gives us $V_t$, interpolating from some chosen $V_0$ to $V_1 = \phi^* V_0$.
\end{proof}

There are two natural subcomplexes in $M$ associated to a Morse open book $(B,\pi,F,V)$:
\begin{defn} \label{def:skel}
 The {\em skeleton}, denoted $\mathrm{Skel}$, is the union over all pages of the descending manifolds of the index $1$ critical points of $(f_t,V_t)$, together with the index $0$ critical point. The {\em co-skeleton}, denoted $\mathrm{Coskel}$, is the union over all pages of the ascending manifolds of the index $1$ critical points.
\end{defn}

\begin{remark}
 The intersection of $\mathrm{Skel}$ and $\mathrm{Coskel}$ is a $1$--complex consisting of the index $1$ critical points in all pages together with a flow line between two index $1$ critical points at each handle slide $t$--value.
\end{remark}

Given a Morse open book $(B,\pi,F,V)$ on $M$, fix coordinates $(\rho,\mu,\lambda)$ on a solid torus neighborhood of each component of $B$ as in Definition~\ref{def:ms}.  Let $n = |B|$, and embed $\amalg^n S^1 \times S^1$ in $M$ as $\{\rho^2 = \epsilon\} = F^{-1}(-\epsilon)$, for suitably small $\epsilon$, mapping coordinates $(s,t)$ on $S^1 \times S^1$ to $(\lambda,\mu)$ on $\{\rho^2=\epsilon\}$. Denote these embedded tori as $\amalg_{i=1}^{n} \mathcal{T}_i$
.
\begin{defn}\label{def:md}
 The \textit{Morse diagram associated to} $(B,\pi,F,V)$ is the collection of decorated tori  \[ (\amalg \mathcal{T}_i, \text{Coskel} \cap \amalg \mathcal{T}_i),\] together with a pairing of curves on the tori corresponding to the same index $1$ critical points.
\end{defn}

 We may characterize the kinds of decorated tori that can occur as Morse diagrams.  We do so here, and when necessary, we may distinguish between the terms  \textit{embedded Morse diagram}, which denotes  tori in the contact manifold whose decoration comes from the intersection with the co-skeleton as described in Definition~\ref{def:md}, and  \textit{abstract Morse diagram}, which  denotes any collection of decorated tori satisfying the description below.

\begin{defn} \label{def:abstractmd}
 An  \em{abstract Morse diagram} is a collection of tori $\amalg^n S^1 \times S^1$ with a finite trivalent graph $\Gamma$  such that
 \begin{itemize}
 \item the edges of $\Gamma$ are monotonic with respect to projection to the second $S^1$ factor; 
 \item for each fixed value $c$ of the second factor, there is a pairing on curves intersecting $\amalg^n S^1\times c$, and the pairing is constant away from vertices;
 \item surgery on $\amalg^n S^1\times c$ with attaching spheres given by paired points on the curves yields a single $S^1$;
 \item trivalent points occur in pairs on the same slice $\amalg^n S^1\times c$.  As $t\rightarrow c^-$, a curve labelled $x$ approaches a curve labelled $y$ from the left (respectively, right), while as $t\rightarrow c^+$, a curve labelled $x$ approaches the other curve labelled $y$ from the right (left).
  \end{itemize}
 
  An isotopy of Morse diagrams is a smooth $1$--parameter family of such graphs with pairings.
\end{defn}
Figure~\ref{fig:ex1} shows a few examples; the pairing data is represented by edge labelings.

\begin{prop} \label{prop:mdmob}
 Every abstract Morse diagram is a Morse diagram associated to some open book. If two Morse open books have isotopic Morse diagrams, then the $3$-manifolds are diffeomorphic via a diffeomorphism respecting the open books.
\end{prop}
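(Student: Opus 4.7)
The plan is to prove the two assertions by direct construction: for the first, reconstruct a Morse open book fiberwise from the diagram; for the second, lift an isotopy of Morse diagrams to a smooth family of Morse open books and integrate.

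For realization, first I would install standard solid torus neighborhoods $D^2 \times S^1$ at each of the $n$ model tori, with coordinates $(\rho,\mu,\lambda)$ as in Definition~\ref{def:ms}, setting $F=-\rho^2$, $V=-(\rho/2)\partial_\rho$, and $\pi=\mu$; this realizes the binding $B$ together with its required collar. For each $t \in S^1$ outside the finite set of handle-slide values, the slice $\Gamma \cap (\amalg^n S^1 \times \{t\})$ is a finite set of points equipped with a pairing, and the third bullet of Definition~\ref{def:abstractmd} guarantees that $0$-surgery along this pairing yields a single circle. I then build the page $\Sigma_t$ by attaching one $1$-handle per pair to the binding tori (with feet centered at the paired points) and capping off the resulting single boundary circle with a disk. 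This surface carries a natural efficient Morse-Smale pair $(f_t,V_t)$ with one index-$0$ critical point in the capping disk, one index-$1$ critical point per $1$-handle, and ascending arcs meeting the binding tori exactly along $\Gamma \cap (\amalg^n S^1 \times \{t\})$.

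To upgrade this to a smooth family of pages, between consecutive handle-slide values I would interpolate $(f_t,V_t)$ smoothly using a trivialization of $\Gamma$, and at each handle-slide value insert the standard local model for a handle slide in a parameterized family of Morse-Smale pairs. Assembling the resulting family into a mapping torus and inserting the binding tori yields the desired Morse open book $(B,\pi,F,V)$ whose associated Morse diagram recovers the given one. For the uniqueness claim, an isotopy $\Gamma_s$, $s \in [0,1]$, of Morse diagrams lets me apply the construction above in a one-parameter family to produce a smooth family $(M_s,B_s,\pi_s,F_s,V_s)$ of Morse open books connecting the two given ones. A standard Moser-type argument, integrating a time-dependent vector field that realizes the $s$-variation of the whole structure, then produces an ambient diffeomorphism $M_0 \to M_1$ carrying $(B_0,\pi_0)$ to $(B_1,\pi_1)$, which is the required open-book-respecting diffeomorphism.

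The hard part will be verifying that the fiberwise recipe assembles into a smooth, essentially canonical family of Morse-Smale pairs across handle-slide values, rather than a merely pointwise one. One must show that the local choices of gradient-like vector fields and Morse functions patch together consistently and that the ascending manifolds meet the binding tori along $\Gamma$ exactly, with the prescribed left/right approach behavior at each handle slide. This requires a parameterized Cerf-theoretic analysis, in the spirit of the existence argument for Proposition~\ref{prop:obdms}, carried out simultaneously in neighborhoods of all handle-slide values; the analogous smoothness-in-$s$ issue reappears in the uniqueness half and is handled by the same methods.
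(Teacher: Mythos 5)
Your realization half is essentially the paper's construction read from the other end of the Morse function (annular collars of the binding, one $1$-handle per pair of marked points, a capping disc for the index~$0$ point, then a parameterized Cerf-theoretic assembly across handle-slide values), and the acknowledged ``hard part'' is no worse than what the paper itself leaves to further choices. The genuine gap is in the uniqueness half, where the argument is circular: your one-parameter family $(M_s,B_s,\pi_s,F_s,V_s)$ interpolates between the two \emph{models you constructed} from $\Gamma_0$ and $\Gamma_1$, not between the two \emph{given} Morse open books. To say that the family ``connects the two given ones'' you must already know that any Morse open book whose diagram is $\Gamma_0$ is diffeomorphic (respecting the open book) to the model built from $\Gamma_0$ --- but that is exactly the statement being proved. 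Nothing in your proposal rules out two non-diffeomorphic Morse open books sharing the same Morse diagram; the Moser/bundle-triviality step only shows the constructed models over an isotopy are mutually diffeomorphic, which is not the issue.

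The missing idea is an injectivity statement: the diagram must be shown to determine the page and, crucially, the monodromy up to isotopy (hence up to conjugation, accounting for the choices in the construction). The paper does this by tracking the system $\mathcal{H}_t$ of cores and co-cores, noting that the diagram determines each $\mathcal{H}_t$ up to isotopy through the prescribed sequence of handle slides, and then invoking the fact that the mapping class group of the punctured page acts faithfully on the isotopy classes of such a filling arc system (the Alexander method, Proposition~2.8 of \cite{FM}). Faithfulness is what pins down the gluing diffeomorphism $\phi$ with $\phi_*(\mathcal{H}_0)\simeq\mathcal{H}_1$ uniquely up to isotopy, so that the monodromy --- and hence the open book --- is determined by the diagram. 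Some substitute for this step (or a direct argument that any Morse open book with diagram $\Gamma$ is isomorphic to your fiberwise model, e.g.\ by flowing its own $(f_t,V_t)$ handle structures onto the model ones and again controlling the ambiguity of the identification) is needed before the Moser-type interpolation can do any work.
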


Note that we do not claim that the diffeomorphism respects the Morse structures, although if  needed, one could describe an appropriate equivalence relation on Morse structures to make this true. The main issue is that the Morse diagram does not record the relative ordering of the critical values of $f_t$.

\begin{proof}
It suffices to give a construction starting from a Morse diagram which makes it clear that the Morse diagram determines the diffeomorphism type of the page and the mapping class of the monodromy (up to conjugation).

We start by recalling the standard construction of a handle structure on a surface built by attaching $k$ $2$-dimensional $1$-handles to a disc.  Glue each handle $[-1,1]\times [-1,1]$ to the disc along $\{\pm 1\} \times [-1,1]$; the co-core of the handle is the arc $\{0\}\times [-1,1]$, and we note that the co-core intersects the boundary of the new surface in a pair of points.  Similarly, concatenating the core curves $[-1,1] \times \{0\}$ with rays to the center of the disc produces a wedge of circles onto which  the new surface deformation retracts, and we refer to each such loop as a core. 

On the Morse diagram,  the level $\amalg^n S^1\times \{0\}$ intersects the trace curves in $2k$ paired marked points. Construct a model surface $\Sigma$ with handle decomposition as above with the property that the natural handle structure induces the same pairing of marked points around the boundary.  Remove an open neighborhood of the center of the original disc and denote the resulting surface by $\widetilde{\Sigma}$.   

The union of the cores and co-cores of $\Sigma$ form a collection $\mathcal{H}$ of properly embedded arcs in $\widetilde{\Sigma}$.  Before continuing, we note a few useful properties of this set.  These curves  cut $ \widetilde{\Sigma}$ into a collection of discs.  They are pairwise non-isotopic, intersect minimally, and have the property that for any three arcs $\gamma_i$, $\gamma_j$, and $\gamma_k$, at least one of the pairwise intersections is empty.  According to Proposition 2.8 in \cite{FM}, the action of the mapping class group of  $\widetilde{\Sigma} $ acts faithfully on the graph formed by any set of curves satisfying these properties.

Now consider the product $\widetilde{\Sigma}\times [0,1]$ and let $\mathcal{H}_0 = \mathcal{H} \times \{0\}$, on $\widetilde{\Sigma}\times \{0\}$. By construction, $\mathcal{H}_0 \cap  \partial \widetilde{\Sigma}\times \{0\}$ agrees with the decoration on the Morse diagram at $t=0$. Perform the necessary isotopies and handle slides on the co-cores so that, for each $t$, $\mathcal{H}_t$ comes from a handle structure on $\widetilde{\Sigma}\times \{t\}$ with the property that the order of the marked points on $\partial \widetilde{\Sigma}\times \{t\}$ agrees with the order of the marked points on the the corresponding $t$-slice of the Morse diagram.   The cores are determined up to isotopy by the co-cores, and the co-cores are  determined by the Morse diagram, so this process  determines $\mathcal{H}_t$ up to isotopy for $t\in [0,1]$.  In order to form a mapping torus from $\widetilde{\Sigma}\times [0,1]$ which identifies $\mathcal{H}_0$ and $\mathcal{H}_1$, we require a diffeomorphism $\phi$ of $\widetilde{\Sigma}$ with the property that $\phi_*(\mathcal{H}_0)$ is isotopic to $\phi_*(\mathcal{H}_1)$.   The proposition noted above implies that  the mapping class of such a $\phi$ is uniquely determined; extended to $\Sigma$, this is the monodromy of the open book. 

The construction above allowed for choice in the original handle structure, but the surfaces associated to any  two such choices are related by a diffeomorphism identifying the initial handle structures.  Making another such choice, the construction above recovers the conjugate of $\phi$ by this diffeomorphism, which determines a diffeomorphic open book. 

We have shown that the $3$--manifold with its open book is determined by the Morse diagram. To see that the Morse diagram actually comes from a Morse structure, realize the $1$--parameter family of handle decompositions by pairs $(f_t,V_t)$. This involves further choices, for which we do not claim any uniqueness.
\end{proof}

\section{Contact structures, Morse structures and Morse diagrams}\label{sec:morsecontact}

Since a Morse diagram determines an open book, and an open book determines a unique isotopy class of contact structures, we immediately have the fact that Morse diagrams describe contact $3$-manifolds. However, to achieve the contactomorphism of Theorem~\ref{thm:contacto} and the consequent generalized front projections, we need a more rigid relationship between Morse structures and contact topology.

\begin{defn} \label{def:mscs}
 Suppose that $(F,V)$ is a Morse structure on $(M,B,\pi)$, and that $\xi$ is a contact structure on $M$ supported by $(B,\pi)$. We say that $(F,V)$ is {\em compatible} with $\xi$ if there is a contact form $\alpha$ for $\xi$ on $M \setminus B$  satisfying the following conditions:
 \begin{enumerate}
  \item On the interior of each page $\Sigma_t \setminus B$, $\omega_t: = (d\alpha)|_{\Sigma_t \setminus B}$ is symplectic and $V_t$ is Liouville for $\omega_t$.
  \item There is a monodromy vector field $X$ such that $\alpha(X) = 1$.
  \item In the given local solid torus coordinates $(\rho,\mu,\lambda)$ near each component of $B$,     $\alpha = (1/\rho^2) d\lambda + d\mu$.
 \end{enumerate}
\end{defn}

Note that the form  $\alpha$ will not extend across $B$.

\begin{lem} \label{lem:mscfcs}
 Given $(M,B,\pi)$ and a contact form $\alpha$ on $M \setminus B$, suppose that, in local coordinates $(\rho,\mu,\lambda)$ near each component of $B$, we have $B = \{\rho=0\}$, $\pi=\mu$, and $\alpha = (1/\rho^2) d\lambda + d\mu$. Then $\ker \alpha$ extends across $B$ and, if $d\alpha$ is positive on the interior of each page, is a contact structure supported by $(B,\pi)$
\end{lem}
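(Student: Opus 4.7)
The essential trick is that while $\alpha$ blows up along $B$, the rescaled form $\beta := \rho^2\alpha = d\lambda + \rho^2\,d\mu$ is smooth on a neighborhood of each binding component and has the same kernel as $\alpha$ wherever $\rho > 0$. Since $\beta|_B = d\lambda \neq 0$, its kernel is a genuine plane field across $B$, and this gives the desired smooth extension $\xi$ of $\ker\alpha$.

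To verify that $\xi$ is a contact structure, I would pass to Cartesian coordinates $x = \rho\cos\mu$, $y = \rho\sin\mu$ near $B$, in which $\rho^2\,d\mu = x\,dy - y\,dx$, so $\beta = d\lambda + x\,dy - y\,dx$ is smooth on $D^2 \times S^1$ and a short computation yields $\beta \wedge d\beta = 2\,dx \wedge dy \wedge d\lambda$, which is nowhere zero. Away from $B$, $\alpha$ is contact by hypothesis, so $\xi$ is contact on all of $M$. A caveat worth flagging: computed in the polar coordinates $(\rho,\mu,\lambda)$, one gets $\beta \wedge d\beta = 2\rho\,d\lambda \wedge d\rho \wedge d\mu$, which appears to vanish at $\rho = 0$, but this is just the degeneracy of polar coordinates, resolved by the Cartesian check.

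To verify support, I would produce a compatible contact form $\alpha'$ for $\xi$ globally defined on $M$. Pick a smooth $F \colon [0,\infty) \to (0,\infty)$ with $F(u) = 1/u$ for $u \geq \epsilon^2$ and $F'(u) < 0$ for all $u > 0$ (a standard cut-off construction), and set
\[
\alpha' \;:=\; F(\rho^2)\bigl(\rho^2\,d\mu + d\lambda\bigr)
\]
on each solid-torus neighborhood of $B$, and $\alpha' := \alpha$ elsewhere. The two definitions agree on their overlap; wherever both are defined, $\alpha' = F(\rho^2)\rho^2\,\alpha$ is a positive multiple of $\alpha$, so $\ker \alpha' = \xi$ on $M \setminus B$, and the Cartesian computation analogous to the one for $\beta$ gives $\alpha' \wedge d\alpha' = 2\,F(\rho^2)^2\,dx \wedge dy \wedge d\lambda > 0$ everywhere. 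For Definition~\ref{def:ob}'s compatibility conditions, $\alpha'|_B = F(0)\,d\lambda$ is strictly positive on $B$ oriented as $\partial \Sigma_t$, and on a page $\{\mu = c\}$ near $B$ a direct computation yields $d\alpha'|_{\Sigma_t} = 2\rho\, F'(\rho^2)\,d\rho \wedge d\lambda$, which is positive in the page orientation precisely because $F' < 0$; outside the neighborhood $d\alpha'|_{\Sigma_t} = d\alpha|_{\Sigma_t}$ is positive by hypothesis.

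The main subtlety is that the naive rescaling $\rho^2 \alpha$ already defines the correct plane field but is not itself a compatible contact form, since its derivative restricts to zero on each page near $B$. Introducing the damping factor $F(\rho^2)$ with $F' < 0$ is exactly what is needed to restore positivity of the page form while preserving both the kernel and the contact condition.
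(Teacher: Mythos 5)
Your proof is correct and follows essentially the same route as the paper: extend the plane field by passing to the rescaled form $d\lambda+\rho^2\,d\mu$, then build a global compatible contact form by modifying the coefficient functions near the binding (your conformal factor $F(\rho^2)$ with $F'<0$ is exactly an explicit instance of the paper's ``choose $(f(\rho),g(\rho))$ interpolating appropriately''). The only cosmetic difference is that you verify the compatibility conditions of Definition~\ref{def:ob} directly, where the paper invokes transversality of the Reeb field to the pages, and you carry out the details the paper leaves to the reader.
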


\begin{proof}
The $1$-form $\alpha = (1/\rho^2) d\lambda + d\mu$
has the same kernel as $d\lambda + \rho^2 d\mu$ and hence, the associated contact structure extends over the core $\{0\} \times S^1 \subset D^2 \times S^1$. In fact, both have the same kernel as $(1/(1+\rho^2))d\lambda + (\rho^2/(1+\rho^2))d\mu$, which also extends over the core and has its Reeb vector field transverse to pages. Furthermore, by choosing functions $(f(\rho),g(\rho))$ that interpolate appropriately between $(1/(1+\rho^2),\rho^2/(1+\rho^2))$ and $(1/\rho^2,1)$, one can produce a single contact form that agrees with $\alpha$ outside a solid torus neighborhood of the binding, extends across the binding, and is supported by $(B,\pi)$.
\end{proof}

\begin{prop} \label{prop:contmob}
 Given any contact $3$--manifold $(M,\xi)$ with supporting open book $(B,\pi)$, there is a Morse structure on $(M,B,\pi)$ compatible with a contact structure $\xi^*$ which is isotopic to $\xi$ through contact structures supported by $(B,\pi)$.
\end{prop}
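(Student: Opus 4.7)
The plan is to start from a topological Morse structure and upgrade it to a contact-geometric one by constructing an adapted contact form, whose kernel will then be identified with $\xi$ (up to supported isotopy) via the uniqueness part of the Giroux correspondence. Concretely, I would first apply Proposition~\ref{prop:obdms} to obtain a topological Morse structure $(F, V)$ on $(M,B,\pi)$, so that near each binding component there are coordinates $(\rho,\mu,\lambda)$ in which $\pi=\mu$, $F = -\rho^2$, and $V = -(\rho/2)\partial_\rho$. The remaining task is to build a contact form $\alpha$ on $M \setminus B$ compatible with $(F,V)$ in the sense of Definition~\ref{def:mscs}: on each page $\Sigma_t$ I need an area form $\omega_t = d\alpha|_{\Sigma_t}$ for which $V|_{\Sigma_t}$ is Liouville, and I need $\alpha$ to restrict to $(1/\rho^2)\,d\lambda + d\mu$ near $B$.

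Next I would produce the family $\omega_t$. The Liouville condition $\iota_{V_t}\omega_t = \alpha|_{\Sigma_t}$ together with $d\alpha|_{\Sigma_t} = \omega_t$ reduces to the first-order linear PDE $\mathrm{div}_{\omega_t}(V|_{\Sigma_t}) = 1$; writing $\omega_t = e^{h_t}\omega_0$ for a reference area form turns this into $V(h_t) = 1 - \mathrm{div}_{\omega_0}(V)$, which is solvable along trajectories of $V$ provided the right-hand side vanishes at every zero $p$ of $V$, i.e. provided $\mathrm{tr}(DV_p) = 1$. Because rescaling $V$ by a positive function only reparameterizes flow lines (preserving the efficient Morse-Smale data) but scales the trace pointwise, a small localized rescaling near the critical points achieves trace one at every $p$. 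Near the binding the required area form $-(2/\rho^3)\,d\rho \wedge d\lambda$ is forced, and one checks directly that $-(\rho/2)\partial_\rho$ is Liouville for it, so the boundary data are consistent with the interior extension.

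Third I would assemble $\alpha$ globally: pick a monodromy vector field $X$ that equals $\partial_\mu$ in the binding coordinates, and declare $\alpha(X) = 1$ together with $\alpha|_{\Sigma_t} = \iota_V\omega_t$. Cartan's formula gives $d\alpha|_{\Sigma_t} = d(\iota_V\omega_t) = \mathcal{L}_V\omega_t = \omega_t$, so $V$ is Liouville for $d\alpha|_{\Sigma_t}$ by construction, and near $B$ a direct calculation recovers $(1/\rho^2)\,d\lambda + d\mu$. The global contact condition $\alpha \wedge d\alpha > 0$ is enforced, as in Thurston--Winkelnkemper, by multiplying the page-tangent part of $\alpha$ by a sufficiently large constant $K$ and reabsorbing the rescaling into $\omega_t$. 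Lemma~\ref{lem:mscfcs} then shows $\xi^* := \ker\alpha$ extends smoothly across $B$ to a contact structure supported by $(B,\pi)$; since compatibility with $(B,\pi)$ is a convex condition on contact forms, the straight-line homotopy between a compatible form for $\xi$ and the constructed $\alpha$ stays compatible, giving the required isotopy from $\xi^*$ to $\xi$ through supported contact structures.

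The hardest step will be the second: ensuring that the PDE for $h_t$ has solutions depending smoothly on $t$ across the finitely many handle-slide $t$-values, where the combinatorics of the flow lines of $V_t$ change, while simultaneously matching the prescribed boundary values on a neighborhood of the binding and the pointwise trace-one normalization at the critical points. This is essentially a parameterized Weinstein-existence problem in dimension two, and the delicate point is to patch the trajectory-wise solutions into a globally smooth family $h_t$ without disturbing the combinatorial structure recorded by the Morse diagram.
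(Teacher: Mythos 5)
Your plan inverts the paper's logic: you fix a purely topological Morse structure from Proposition~\ref{prop:obdms} and then try to manufacture a family of area forms $\omega_t$ for which the \emph{given} $V_t$ is Liouville. That step --- which you yourself flag as the hardest --- is exactly where the content of the proposition lies, and as set up it does not go through. First, positive rescaling of $V$ cannot arrange $\mathrm{tr}(DV_p)=1$ at a zero where the trace is nonpositive, since rescaling by $u>0$ multiplies the trace at $p$ by $u(p)$; a gradient-like field supplied by Proposition~\ref{prop:obdms} may perfectly well have $\mathrm{div}\,V\le 0$ at an index~$1$ zero (e.g. $V=x\partial_x-2y\partial_y$), so you would have to genuinely modify $V$ near its zeros, not merely reparameterize flow lines. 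Second, even with positive trace the equation $V(h_t)=1-\mathrm{div}_{\omega_0}(V_t)$ is a cohomological equation over hyperbolic zeros: trajectory-wise integration does not automatically produce a solution that is smooth along the stable/unstable manifolds (resonance-type obstructions), and at handle-slide $t$-values there is a saddle--saddle connecting orbit along which the solutions must match from both ends; nothing in the proposal controls this. Third, and most importantly, the family must close up on the mapping torus --- you need $\omega_1=\phi^*\omega_0$ (compatibly with the monodromy identification) in addition to the prescribed model near the binding --- and this global periodicity constraint is never addressed. The paper avoids all of these issues by never fixing $(F,V)$ in advance: it builds the page as a Weinstein domain with one fixed symplectic form $d\delta$, realizes the monodromy (a product of Dehn twists) by a Weinstein homotopy $(f_t,V_t)$ that is Liouville for that same $d\delta$ at every $t$ (Lemma~\ref{lem:WeinsteinHomotopy}, proved via Cerf theory together with Lemmas 12.18 and 12.20 of Cieliebak--Eliashberg), and only then sets $\alpha=\iota_{V_t}d\delta+K\,dt$ and glues in the binding solid tori by an explicit coordinate map.

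Two further steps are stated incorrectly even granting the $\omega_t$'s. The contactness normalization is backwards: with the page part scaled, $\alpha=K\beta_t+dt$, one gets $\alpha\wedge d\alpha= dt\wedge\bigl(K\omega_t-K^2\,\beta_t\wedge\partial_t\beta_t\bigr)$ up to sign conventions, and the $K^2$ term can dominate with the wrong sign; the Thurston--Winkelnkemper trick, and the paper's choice, is $\alpha=\beta_t+K\,dt$, where the large term $K\,dt\wedge\omega_t$ is positive. Finally, the last sentence is not a convexity argument: adaptedness ($d\alpha>0$ on pages, $\alpha>0$ on $B$) is convex, but the contact condition is not, so the straight-line homotopy between your $\alpha$ and a compatible form for $\xi$ need not stay contact. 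The correct move is simply to invoke Theorem~\ref{thm:obdcs} (Giroux uniqueness), as the paper does, or to reprove it with the standard interpolation after adding a large multiple of $d\pi$ away from the binding.
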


\begin{proof}

Rather than constructing $(F,V)$ from $\xi$, we construct a $3$--manifold with open book $(M',B',\pi')$ diffeomorphic to $(M,B,\pi)$, starting from the page and monodromy of $(M,B,\pi)$, and along the way we construct $(F',V')$ and $\alpha'$ on $M' \setminus B'$ satisfying the conditions in Definition~\ref{def:mscs}. In the construction it will be clear that $F'$ and $V'$ extend across $B'$, and by Lemma~\ref{lem:mscfcs}, $\alpha'$ extends to a contact {\em structure} $\xi'$ compatible with $(B',\pi')$. Pull all this data back to $(M,B,\pi)$ by the diffeomorphism and we get a contact structure $\xi^*$ on $M$ supported by $(B,\pi)$ and a Morse structure $(F,V)$ for $(B,\pi)$ compatible with $\xi^*$. Finally, by Theorem~\ref{thm:obdcs}, $\xi^*$ is isotopic to $\xi$ through contact structures supported by $(B,\pi)$.

Having explained the structure of the proof, we will now give the construction but we will drop the ``primes'' from our notation to simplify the exposition.

Recall that the {\em page} $\Sigma_0$ of the given $(M,B,\pi)$ is the compact surface $\pi^{-1}(0) \cup B$. Let $\Sigma = \pi^{-1}(0)$, the noncompact interior of $\Sigma_0$. Let $(-\epsilon,0] \times B$ parametrize a collar neighborhood of $B = \partial \Sigma_0$, so that $\partial \Sigma_0 = \{0\} \times B$. Let $E = (-\epsilon,0) \times B$, the union of the ``ends'' of $\Sigma$, and reparameterize $E$ as $(-\epsilon,\infty) \times B$ using an arbitrary orientation preserving diffeomorphism $(-\epsilon,0) \to (-\epsilon,\infty)$. Use coordinates $(r,s)$ on $E$, where $r \in (-\epsilon,\infty)$ and $s$ is an $\mathbb{R}/\mathbb{Z}$ coordinate on each component of $B$.

Choose a Morse function $f$ on $\Sigma$ which equals $r$ on $E$, with a single index $0$ critical point $p \in \Sigma \setminus E$ and with no index $2$ critical points.
\begin{lem} 
There exists a $1$--form $\delta$ and a vector field $V$ on $\Sigma$ satisfying the following conditions:
\begin{enumerate}
 \item On $E$, $\delta = (1+r) ds$ and $V=(1+r)\partial_r$.
 \item On all of $\Sigma$, $d\delta >0$.
 \item $(f,V)$ is an efficient Morse-Smale pair.
 \item $V$ is Liouville for $d\delta$.
\end{enumerate}
\end{lem}

The proof of the lemma follows from a standard Weinstein handle construction~\cite{Weinstein}, and we leave the details to the reader. We now use the language of Weinstein cobordisms, following Cieliebak and Eliashberg~\cite{CY} to extend this structure to the rest of the mapping torus.

A {\em Weinstein cobordism} is a $4$--tuple $(W,\omega,V,f)$ where $W$ is a compact cobordism from $\partial_- W$ to $\partial_+ W$, $\omega$ is a symplectic form on $W$, $f: W \to [a,b] \subset \rr$ is a Morse function with $\partial_+ W = f^{-1}(b)$ and $\partial_- W = f^{-1}(a)$, and $V$ is a Liouville vector field for $\omega$ which is gradient-like for $f$. In particular, $V$ points in along $\partial_- W$ and out along $\partial_+ W$.  A {\em Weinstein homotopy} is a smooth family $(W, \omega_t,V_t,f_t)$, with $t \in [0,1]$ which is a Weinstein cobordism except at finitely many  $t$ where birth-death singularities may occur for $f_t$, and hence for $V_t$ as well.

Returning to the noncompact page $\Sigma$ with Morse function $f$, let  $W=f^{-1}([f(p)+\epsilon,0])$ for sufficiently small $\epsilon>0$, so that there are no critical values in $(f(p),f(p)+\epsilon]$ and thus $\partial_-W=\partial \nu(p)$ for some disk neighborhood $\nu(p)$ of the unique index $0$ critical point $p$. We claim that there exists a diffeomorphism $\phi : \Sigma_0 \to \Sigma_0$ representing the monodromy of $(M,B,\pi)$ which is the identity on $\Sigma_0 \setminus W$, such that, restricting $\phi$ to $W$, there exists a Weinstein homotopy  $(W, d\delta, V_t, f_t)$  satisfying the following conditions:
\begin{enumerate}
  \item $(f_0,V_0) = (f,V)$
  \item $D\phi(V_1) = V_0$
  \item $f_0 \circ \phi = f_1$
  \item $(f_t,V_t)=(f_0,V_0)$ on a neighborhood of $\partial W$.
  \item $(f_t,V_t)$ is an efficient Morse-Smale homotopy.
\end{enumerate}
Lemma~\ref{lem:WeinsteinHomotopy} below asserts the existence of such a homotopy in the case that $\phi$ is a Dehn twist along a nonseparating or boundary parallel simple closed curve in $W$, and its proof occupies Section~\ref{sec:homotopy} below. The claim then follows from the fact that the mapping class group of a surface is generated by Dehn twists along such curves~\cite{Lick}. 

Identifying the homotopy parameter with the $t$ parameter in $\Sigma\times[0,1]$, any such Weinstein homotopy defines a smooth function $F$ and a gradient-like vector field $V$ on the mapping torus $\Sigma \times [0,1]/(p,1) \sim (\phi(p),0)$.    Furthermore, we define a $1$-form on each page $ \Sigma\times\{t\}$ by taking the contraction of $d\delta$ with $V_t$:
\[ \delta_t := \iota_{V_t}d\delta.\] 
By construction, this defines a one-form $\Lambda$ on the mapping torus (via $\Lambda(\partial_t)=0$ and $\Lambda|_{\Sigma_t} = \delta_t$), and for sufficiently large $K$,  $\alpha=\Lambda+Kdt$ is contact.  The standard construction of a contact form on a mapping torus uses a linear interpolation between $\delta_0$ and $\phi^*\delta_0$ where we have used $\Lambda$, but we see that this construction agrees with our $\alpha$ on  the end(s) $E$ (and on a neighborhood of the transverse knot traced out by the index $0$ critical point $p$).  We extend $\alpha$ across the binding of the open book as described next; this extension is a slight modification of the extension described in~\cite{OS}.

In order to construct a closed $3$-manifold $M(\Sigma_0,\phi)$, and thus transport this data to $(M,B,\pi)$, use coordinates  $(r,s,t)$ on the ends $E \times [0,1]/\!\sim$ of the mapping torus and glue in solid tori $D^2\times S^1$, with coordinates  $(\rho, \mu, \lambda)$, with $0 \leq \rho^2 \leq K/(1-\epsilon)$, via the map
\[ \psi(\rho, \mu, \lambda)=((K/\rho^2)-1, \lambda, \mu).\]
Note that $\psi^* \alpha = (K/\rho^2) d\lambda + K d\mu$; globally rescaling by $1/K$ gets the desired local model $(1/\rho^2) d\lambda + d\mu$. A simple calculation shows that $V$ has the correct local model, and an appropriate reparameterization of $(-\epsilon,\infty)$ as $(-\epsilon,0)$ is all that is needed to modify $F$ to have the correct local model. The monodromy vector field is $\partial_t$.
\end{proof}

\subsection{Homotopy existence}\label{sec:homotopy}

This section gives a technical construction of the homotopy whose existence was  claimed above.  Instead of considering an arbitrary monodromy map, we restrict to the case  where $\phi$ is a single Dehn twist $\tau_C$ about a curve $C$. We assume  $C$ is either non-separating or boundary parallel; this suffices to establish the general case, as the relative mapping class group of a surface is generated by such Dehn twists \cite{Lick}. 

To use the lemma presented below in the proof above, one needs to choose appropriate minor reparameterizations in the collar neighborhood direction, which is mapped via the Morse function to $[0,1]$ here, but may need to be mapped to some $[-a,0]$, for example. Also, here the Liouville vector field is presented in the form $\partial_\zeta$ instead of, for example, $(1+r) \partial_r$. This leads to an exponential $e^{k \zeta}$ appearing in the symplectic form. A further standard reparameterization takes care of this. 

\begin{lem} \label{lem:WeinsteinHomotopy}
 Suppose we are given the following data:
\begin{enumerate}
 \item A compact, connected, oriented $2$-dimensional cobordism $W$ from $\partial_0 W$ to $\partial_1 W$, with each $\partial_i W$ a compact, oriented, nonempty, possibly disconnected, $1$--manifold.
 \item A fixed parametrization of a collar neighborhood $\kappa$ of $\partial W$ as $\kappa = ([0,\epsilon] \times \partial_0 W) \amalg ([1-\epsilon,1] \times \partial_1 W)$.
 \item A Morse function $f: W \to [0,1]$ with $f^{-1}(i) = \partial_i W$ for $i=0,1$, and with only critical points of index $1$, with distinct critical points having distinct critical values, with $f|_{\kappa}$ being projection onto the first factor $[0,\epsilon] \amalg [1-\epsilon] \subset [0,1]$.
 \item An area form $\beta$ on $W$ with $\beta|_{\kappa} = e^{k \zeta} d\zeta \wedge d\theta$, where $\zeta$ is the $[0,1]$ coordinate on $\kappa$, $\theta \in [0,2\pi]$ is a fixed oriented angular coordinate on each component of $\partial W$, and $k$ is some positive constant. (We may need $k > 1$ if there are many more components of $\partial_0 W$ than $\partial_1 W$, for example.)
 \item A vector field $V$ on $W$ which is gradient-like for $f$ and is Liouville for $\beta$ (i.e. $d \gamma = \beta$ where $\gamma = \imath_V \beta$), with $V|_{\kappa} = \partial_\zeta$. Here we also assume that the ascending and descending manifolds, with respect to $V$, of distinct critical points of $f$, are disjoint.
 \item A  simple closed curve $C \subset W \setminus \kappa$ which is either non-separating or boundary parallel.
\end{enumerate}
 Then there exists a $1$--parameter family of pairs $(f_t,V_t)$, with $t \in [0,1]$, satisfying the following properties:
\begin{enumerate}
 \item $f_0 = f$ and $V_0=V$.
 \item $f_t = f_0$ and $V_t = V_0$ for $t \in [0,\epsilon]$, while $f_t =f_1$ and $V_t = V_1$ for $t \in [1-\epsilon,1]$.
 \item For each $t \in [0,1]$, $f_t$ is Morse, has critical points only of index $1$, and is projection on $[0,\epsilon] \amalg [1-\epsilon]$ on the collar neighborhood $\kappa$.
 \item For each $t \in [0,1]$, $V_t$ is gradient-like for $f_t$, Liouville for $\beta$, and equals $\partial_\zeta$ on $\kappa$.
 \item For all but finitely many values of $t$, distinct critical points of $f$ have distinct critical values, and at each of those finitely many values of $t$, precisely two critical values of distinct critical points cross transversely.
 \item For all but finitely many values of $t$ (distinct from the special values of $t$ in the preceding item), the ascending and descending manifolds, with respect to $V_t$, of distinct critical points of $f_t$, are disjoint. At those finitely many values, handle slides occur.
 \item For some curve $C'$ isotopic to $C$, and some Dehn twist $\tau_{C'}$ about $C'$, $(\beta,f_1,V_1) = \tau_{C'}^* (\beta,f_0,V_0)$.
\end{enumerate}
\end{lem}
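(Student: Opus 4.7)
The plan is to realize the Dehn twist $\tau_{C'}$ via a Weinstein homotopy that is localized near an isotopic copy $C'$ of $C$ and effects the twist by means of a single handle slide. The division into non-separating and boundary-parallel cases is largely cosmetic; both reduce to a standard local model once a preliminary normalization is performed.

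First I would perform a preliminary normalization: by a sequence of handle slides and local adjustments --- each itself a short Weinstein homotopy satisfying the conclusions of the lemma --- I replace $(f,V)$ by a pair for which an isotopic copy $C'$ of $C$ meets the descending manifold $D(p)$ of exactly one index-$1$ critical point $p$ transversely in one point (the non-separating case), or lies in an annular neighborhood of a boundary component containing exactly one index-$1$ critical point whose descending arc reaches that boundary (the boundary-parallel case). Both normalizations are achievable because $C$ is non-separating or boundary-parallel and because the Weinstein class of $V$ is stable under handle slides. Next, in a neighborhood $N$ of $C' \cup D(p) \cup \{p\}$, I would fix a standard saddle model in which $\beta$ and $(f,V)$ take explicit forms, and exhibit a $1$-parameter family $(f_t,V_t)$ that drags the foot of the descending arc of $p$ once around $C'$. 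At a single value of $t$ the moving arc passes through $p$ itself, producing the required handle-slide event; away from that value the pair is efficient Morse-Smale. Outside $N$ the family is constant in $t$. Tracking cores and co-cores of the resulting handle decomposition shows that the net effect is precisely $\tau_{C'}$, so $(f_1,V_1) = \tau_{C'}^*(f_0,V_0)$ as required. The collar $\kappa$ is untouched since $N \cap \kappa = \emptyset$ after Step~1, and the interpolation and endpoint conditions on $(f_t,V_t)$ are built into the model.

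The main obstacle is maintaining the Liouville condition against the \emph{fixed} area form $\beta$ throughout the homotopy. One cannot simply set $(f_t,V_t) = \psi_t^*(f_0,V_0)$ for a path of diffeomorphisms $\psi_t$ from the identity to $\tau_{C'}$, because that would also pull back $\beta$. Instead, $V_t$ must be constructed directly as a Liouville field for the fixed $\beta$ on $N$, via a $1$-parameter family of primitives $\gamma_t$ satisfying $d\gamma_t = \beta|_N$ and $\gamma_t = \iota_{V_t}\beta$, arranged to be gradient-like for $f_t$ and to execute the prescribed slide. After the normalization of Step~1 places us in a standard model, this becomes a concrete local calculation parallel to the Weinstein handle-slide analysis in Cieliebak--Eliashberg~\cite{CY}, and the verification of the remaining items in the conclusion is then routine.
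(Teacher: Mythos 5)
Your overall scheme (normalize by preliminary Weinstein homotopies, perform a local move near an isotopic copy $C'$ of $C$, and conclude that the endpoint is $\tau_{C'}^*(f_0,V_0)$) parallels the paper's proof in outline, but the central mechanism you propose cannot occur, and the two genuinely hard points are left unproved. The event you describe --- dragging the foot of the descending arc of $p$ once around $C'$ so that ``at a single value of $t$ the moving arc passes through $p$ itself, producing the required handle-slide event'' --- is impossible: since $V_t$ is gradient-like for $f_t$, the function $f_t$ is strictly monotone along flowlines, so the descending manifold of $p$ lies in $\{f_t<f_t(p)\}$ away from $p$ and can never return to $p$ or meet the ascending manifold of $p$; gradient-like fields admit no homoclinic connections, hence no ``self'' handle slides. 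Moreover, since your normalization places only the single critical point $p$ inside the support $N$, no handle slide of any kind can happen there; and more fundamentally, a Dehn twist is not realized by a handle-slide event at all --- in the paper's construction the twist phase contains no handle slides, the twist being carried by the winding of the gradient-like data around a level circle, while handle slides occur only in the normalization phases. Your key claim that ``tracking cores and co-cores shows the net effect is precisely $\tau_{C'}$'' is exactly conclusion (7) of the lemma and is asserted rather than proved. Finally, you correctly identify the main analytic difficulty (keeping $V_t$ Liouville for the \emph{fixed} $\beta$) but only restate it as the existence of primitives $\gamma_t$ with $d\gamma_t=\beta$, deferring the actual construction to an unspecified local calculation.

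For comparison, the paper resolves these points as follows. It first homotopes $(f,V)$, through Weinstein homotopies realizing the Cerf-theoretic reorderings and handle slides (via Lemmas 12.18 and 12.20 of \cite{CY}), until a curve $C'$ isotopic to $C$ lies in a level set with a collar $\nu=[1/2-\epsilon,1/2+\epsilon]\times C'$ on which $f$ is the $\zeta$-coordinate, $V=\partial_\zeta$, and $\beta=e^{l\zeta}\,d\zeta\wedge d\theta$. The twist is then the shear $\psi_t(\zeta,\theta)=(\zeta,\theta+h_t(\zeta))$, which preserves both $f$ and $\beta$; hence setting $f_t=f$ and $V_t=\psi_t^*V$ keeps $V_t$ Liouville for the same $\beta$ and gradient-like for the same $f$, which is precisely the idea your proposal is missing. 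Note also that after any local move you would only reach $\tau_{C'}^*$ of the \emph{normalized} pair, so one must run the normalization backwards pulled back by $\tau_{C'}$ (as the paper does on $t\in[2/3,1]$) in order to end at $\tau_{C'}^*(f_0,V_0)$; this step is absent from your outline.
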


\begin{rem}
 We emphasize in the above that Dehn twists about curves are only well-defined up to isotopy, and we do not achieve an exact given Dehn twist, but only a carefully constructed representative of its isotopy class. 
\end{rem}

\begin{proof}[Proof of Lemma~\ref{lem:WeinsteinHomotopy}]
 We will construct the homotopy in stages. From time $t=0$ to $t=1/3$, we construct a homotopy so that a curve $C'$ isotopic to $C$ is contained in the level set $f_{1/3}^{-1}(1/2)$, with $(\beta, f_{1/3},V_{1/3})$ having a nice form in a neighborhood of $C'$. From time $t=1/3$ to $t=2/3$ we implement the Dehn twist about $C'$, so that $(\beta,f_{2/3}, V_{2/3})=\tau_{C'}^* (\beta,f_{1/3},V_{1/3})$. Then from time $t=2/3$ to $t=1$ we run the original $0 \leq t \leq 1/3$ homotopy backwards, but pulled back via the Dehn twist $\tau_{C'}$, giving us a homotopy from $(\beta,f_{2/3},V_{2/3}) = \tau_{C'}^*(\beta,f_{1/3},V_{1/3})$ to $(\beta,f_1,V_1) = \tau_{C'}^*(\beta,f_0,V_0)$.

To construct the $t\in [0,\frac{1}{3}]$ stage of the homotopy,  we first consider only  the Morse theory. With $C$ as above, we can construct a Morse function $g$ on $W$ such that $C \subset g^{-1}(1/2)$, with only critical points of index $1$. Then there exists a generic homotopy from $f=f_0$ to $g$, which will be Morse for all but finitely many times, at which times births or deaths occur. However, it is standard that, because we have no index $0$ or $2$ critical points at the beginning and end of this homotopy, we can eliminate index $0$ and $2$ critical points at all intermediate times. (See, for example, Theorems~3 and~4 in~\cite{GK}.) In this low-dimensional case, we are left with only index $1$ critical points. In particular, there are in fact no births or deaths and the homotopy is Morse at all times.

 Now we consider $V=V_0$, together with $f$, as inducing a handle decomposition of $W$. The handle decomposition is not enough to recover the isotopy classes of the level sets of $f$, but these can be recovered from the handle decomposition together with the ordering of the critical points according to height (i.e., the value of $f$). In other words, any two Morse functions with gradient-like vector fields inducing isotopic ordered handle decompositions have isotopic level sets. A gradient-like vector field for $g$ also induces  an ordered handle decomposition, and standard Cerf theory then implies  that there is a sequence of handle slides and reorderings of handles which transforms  the initial ordered handle decomposition for $f$ to the ordered handle decomposition for $g$. 

 Now we will produce a homotopy $(f_t,V_t)$, for $0 \leq t \leq 1/3$ with the property that the sequence of handle slides and handle reorderings is combinatorially the same as the  sequence  produced by Cerf theory to transform $f$ to $g$.  In order to show that such a homotopy exists, we appeal to the discussion of Weinstein homotopies developed in \cite{CY}.  The existence of a homotopy reordering critical points follows immediately from Lemma 12.20.  
 
 The existence of a homotopy realizing a handle slide follows from Lemma 12.18, as we briefly explain.  Consider two index-one critical points $X$ and $Y$ with $f(X)=a < f(Y)=b$, and suppose that in the desired handle slide, the handle corresponding to $Y$ slides over the handle corresponding to $X$. This means that, in any level set between $a$ and $b$, the descending manifold for $Y$ slides across the ascending manifold for $X$. Let $W'=f^{-1}[a+\epsilon, b-\epsilon]$, seen as a Weinstein cobordism by restricting the auxiliary data from $W$. Locally, the descending  manifold of $Y$  intersects $\partial W'$ in a pair of points $p_+ \in f^{-1}(a +\epsilon)$ and $p_-\in f^{-1}(b-\epsilon)$.  The points $p_\pm$ are isotropic $0$-manifolds in the contact $1$-manifold $\partial W'$, so any smooth isotopy of these points is trivially a Legendrian isotopy.  Lemma 12.18 states that any Legendrian isotopy of $p_-$ can be realized by a Weinstein homotopy preserving the property that $p_-$ is the image of $p_+$ under the negative flow of $V$.  Applying this result to an isotopy passing $p_-$ past the point where the ascending manifold of $X$ intersects $\partial_-W'=f^{-1}(a +\epsilon)$ realizes the desired handle slide.

   \begin{figure}%[h]
\begin{center}
\scalebox{.7}{\includegraphics{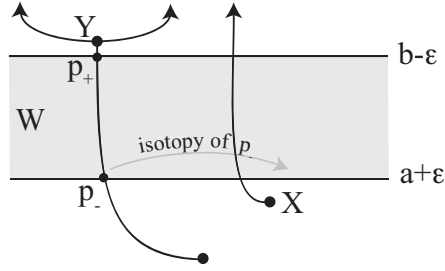}}
\caption{ Weinstein homotopy in $W'$ realizing a handle slide on $W$.}\label{fig:teleport}
\end{center}
\end{figure}

After constructing the homotopy described above,  some curve isotopic to $C$ is contained in a level set of $f_{1/3}$, and we can clearly arrange  this to be  $f_{1/3}^{-1}(1/2)$.  A further isotopy of $f_{1/3}$ near this level set ensures that $df_{1/3}(V_{1/3})=1$ near $C'$.  This immediately allows us to parametrize a neighborhood $\nu$ of $C'$ as $\nu = [1/2-\epsilon,1/2+\epsilon] \times C'$ so that in $\nu$, $f_{1/3}$ is projection onto the first factor $\zeta \in [1/2-\epsilon,1/2+\epsilon]$, $V_{1/3} =\partial_\zeta$, and $\beta = e^{l \zeta} d\zeta \wedge d\theta$, for some constant $l > 0$ and some angular coordinate $\theta \in [0, 2\pi]$ on $C'$. (Technically, we now have this result at time $t$ slightly greater than $1/3$, but we may reparameterize so that this has been achieved by time $t=1/3$.)

 Now consider an ambient isotopy $\psi_t : \nu \to \nu$ defined in the coordinates above by $\psi_t(\zeta,\theta)=(\zeta,\theta+h_t(\zeta))$ where $h_t: [1/2-\epsilon,1/2+\epsilon] \to [0,2\pi]$, for $t \in [1/3,2/3]$, is a family of functions as in Figure~\ref{fig:twist}. 
\begin{figure}%[h] 
\label{fig:twist}
\labellist
\tiny\hair 2pt
\pinlabel $0$ [r] at 3 9
\pinlabel $2\pi$ [r] at 3 108
\pinlabel $\frac{1}{2}-\epsilon$ [t] at 6 7
\pinlabel $\frac{1}{2}$ [t] at 85 7
\pinlabel $\frac{1}{2}+\epsilon$ [t] at 164 7
\pinlabel $t=\frac{1}{3}$ [b] at 130 8
\pinlabel $t=\frac{1}{2}$ [b] at 140 56
\pinlabel $t=\frac{2}{3}$ [t] at 149 108
\endlabellist
\centering
 \includegraphics{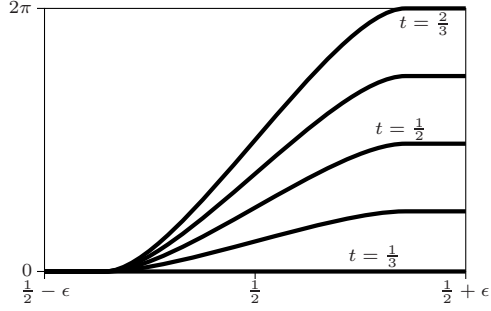}
 \caption{The functions implementing a Dehn twist.}
\end{figure}
Note that $\psi_0$ is the identity, that $\psi_1$ is a Dehn twist about $C'$, and that $\psi_t^* \beta = \beta$ and $\psi_t^* f_{1/3} = f_{1/3}$. Furthermore, $\psi_t^* (V_{1/3}) = V_{1/3}$ near $\partial \nu$. So now we define our homotopy for $1/3 \leq t \leq 2/3$ as: $f_t = f_{1/3}$ and $V_t = \psi_t^*(V_{1/3})$.

Thus, at time $t=2/3$ we have $(\beta,f_{2/3},V_{2/3})=\tau_{C'}^* (\beta, f_{1/3},V_{1/3})$. Then, for $2/3 \leq t \leq 1$, let $(f_t,V_t) = \tau_{C'}^*(f_{1-t},V_{1-t})$, and we are done.

\end{proof}

\begin{remark}

Lemma~\ref{lem:WeinsteinHomotopy} is stronger than required.  In the case that the original Morse function has $n$ index $0$ critical points which are fixed by $\phi$, the lemma produces a Weinstein homotopy on the  cobordism defined by deleting a neighborhood of each point.  One natural source of such a cobordims is the presence of a  transverse link $\mathcal{T}$ in the contact manifold.  Any such link may be transversely braided with respect to the open book \cite{Pav}, and one may take the intersections of $\mathcal{T}$ with the pages as the index $0$ critical points in the  Morse functions $f_t$.  

\end{remark}

\section{Contactomorphism}\label{sec:contacto}

We now prove Theorem~\ref{thm:contacto}, which we restate here for the sake of readability. Recall that  $W = (0,\infty) \times S^1 \times S^1$ with coordinates $x \in (0,\infty)$, $y, z \in S^1$, and with contact structure $\xi_W = \ker (dz + x\; dy)$. We are given some $(M,\xi,B,\pi)$.

\begin{thm1.1}
 There is a $2$-complex $\mathrm{Skel} \subset M$ with the property that, after modifying $\xi$ by an isotopy through contact structures supported by $(B,\pi)$, the complement $(M \setminus (\mathrm{Skel} \cup B), \xi)$ is contactomorphic to a disjoint union of $n=|B|$ copies of $(W,\xi_W)$.
\end{thm1.1}

\begin{proof}
 For the first claim, use Proposition~\ref{prop:contmob} to isotope $\xi$ and then produce a Morse structure $(F,V)$ on $(M,B,\pi)$ compatible with $\xi$. This then gives $\mathrm{Skel}$, and we now claim that each component of $(M \setminus (\mathrm{Skel} \cup B), \xi)$ is contactomorphic to $(W,\xi_W) = ((0,\infty) \times S^1 \times S^1, \ker (dz + x dy))$, via a contactomorphism taking $\pi$ to $z$ and $V$ to $x \partial_x$.

 To see this, note that there is one component of $M \setminus (\mathrm{Skel} \cup B)$ for each component of $B$. Fix one such component $Y$. We define a contactomorphism $\Psi: Y \to W$ as follows. First we define $\Psi$ on an open neighborhood $U$ of the relevant component of $B$, using local solid torus coordinates $(\rho,\mu,\lambda)$ as in Definition~\ref{def:mscs}, and coordinates $(x,y,z)$ on $W$:
 \[ \Psi(\rho,\mu,\lambda) = ((1/\rho^2),\lambda,\mu) \]
 Direct calculation verifies that $\alpha = (1/\rho^2) d\lambda + d\mu$ becomes $dz + x dy$, $V= -(\rho/2) \partial_\rho$ becomes $x \partial_x$ and $\pi = \mu$ becomes $\pi = z$. Thus $\Psi$ behaves as advertised on $U$ and $\Psi(U)$. Now $\Psi$ is uniquely determined on the rest of $Y$ and $W$ by the requirement that $D\Psi(V) = x \partial_x$, since all of $Y$ can be reached from $U$ by flowing along $V$. Since $V$ is tangent to pages and $x \partial_x$ is tangent to constant $z$ slices, it is clear that we will have $\Psi^* z = \pi$ everywhere.
 
 It remains only to verify that $\Psi$ is a contactomorphism everywhere. Write $\Psi_* \alpha$ as $a dx + b dy + c dz$ where, in principle, $a$, $b$ and $c$ are functions of $x$, $y$ and $z$. Recall that there is a vector field $X$ on $M$ (the monodromy vector field) such that $d\mu(X) = \alpha(X) = 1$. Thus $dz$ and $\Psi_* \alpha$ agree on $X$ which is transverse to constant $z$ levels, so the function $c$ is identically $1$; i.e., $\Psi_* \alpha = dz + a dx + b dy$. The $1$--form $a dx + b dy$ is the restriction of $\Psi_* \alpha$ to constant $z$ levels, i.e., ``pages'', so now we show that this restriction must be simply $x dy$. To see this, first note that $D\Psi(V) = x \partial_x$ is Liouville (on each page) for $dx \wedge dy = \Psi_* d\alpha$ on $\Psi(U)$, while $x \partial_x$ is Liouville for $dx \wedge dy$ on all of $W$ and $D\Psi(V)$ is Liouville for $\Psi_* d\alpha$ on all of $W$. Since all of $W$ is reached from $\Psi(U)$ by flowing along $D\Psi(V)=x \partial_x$, this implies that, restricting to pages, $\Psi_* d\alpha = dx \wedge dy$. But then, again restricting to pages, $\Psi_* \alpha = \imath_{D\Psi(V)} \Psi_* d\alpha = \imath_{x \partial_x} dx \wedge dy = x dy$.
\end{proof}

\begin{remark}
 As asserted in the introduction, $(M,B,\pi,\xi)$ is completely determined as a compactification of $\amalg^n W$ by the Morse diagram associated to the Morse structure $(F,V)$, and in fact any Morse diagram arises this way. This follows directly from  the fact that Proposition~\ref{prop:contmob} gives an explicit construction of $(M,B,\pi,\xi)$ starting from handle slide data recorded by the Morse diagram.
\end{remark}

In the remainder of the paper it is convenient to work with the contactomorphism $\Gamma = \Psi^{-1} : W \to Y$. Before concluding this section, we briefly describe an extension of $\Gamma$ which will prove useful for  Section~\ref{sec:tb}. 

Let $\widetilde{W} = [0,\infty] \times S^1 \times S^1 \supset W$ and extend $\Gamma$ to $\widetilde{W}$ so that $\{x=\infty\}$ maps onto a component of the binding and $\{x=0\}$ maps onto (a subset of) the skeleton. 

For each component $B_i$ of the binding and we let $\widetilde{W}_i$ be a corresponding copy of $\widetilde{W}$. Taking all of these together gives a surjective map $\widetilde{\Gamma}:\amalg_i \widetilde{W}_i\rightarrow M$. This  is a quotient map coming from an obvious equivalence relation on $\amalg_i \{x=\infty\}$ and a subtle equivalence relation on $\amalg_i \{x=0\}$. The map $\widetilde{\Gamma}$ factors through a space which we will call $\widetilde{M}$, a $3$-manifold with boundary defined by taking only the obvious equivalence relation on $\amalg_i \{x=\infty\}$,  so that the interior of $\widetilde{M}$ is naturally identified with the complement of the skeleton in $M$. In fact, $\widetilde{M}$ is nothing more than a disjoint union of compact solid tori, one for each component of the binding, but the important structure is the map $\widetilde{M} \to M$; we view $\widetilde{M}$ as a manifold-with-boundary compactification of $M \setminus \text{Skel}$, as distinct from the closed manifold compactification, which is $M$ itself.

\section{Front projections for Legendrian knots}\label{sect:front}

In this section we show how Morse structures may be used to define front projections of Legendrian knots.  

\begin{defn} If $\Lambda_i$ is a Legendrian curve in the complement of the skeleton and the binding, the \textit{front projection} $\mathcal{F}(\Lambda_i)$ is the result of flowing $\Lambda_i$ by $\pm V$ to the Morse diagram for the open book.  The front projection of a Legendrian knot $\Lambda\subset M\setminus B$ is the front projection of the curves $\Lambda\setminus (\Lambda\cap \text{Skel}(F,V))$.
\end{defn}

Recall that the tori $\amalg\mathcal{T}_i$ of the Morse diagram separate a neighborhood of the binding from the rest of the manifold.  When part of $\Lambda_i$ lies on the binding side of some $\mathcal{T}_i$, flow by $-V$ sends it to the Morse diagram, while curves on the opposite side of $\mathcal{T}_i$ flow via $V$ to the Morse diagram.

\begin{prop}\label{prop:frontproj} The front projection $\mathcal{F}(\Lambda)$ (after possibly perturbing $\Lambda$ by an arbitrarily small Legendrian isotopy) is a collection of smooth curves $\mathcal{F}(\Lambda_i)$ away from finitely many semicubical cusps.   Endpoints of $\mathcal{F}(\Lambda)$ occur in pairs with the same $t$-coordinate on opposite sides of paired trace curves. The tangent at each endpoint has slope $0$ and away from such endpoints, the slope of the tangent to $\mathcal{F}(\Lambda)$ lies in $(-\infty, 0)$.  The Legendrian knot $\Lambda$ may be recovered from $\mathcal{F}(\Lambda)$. 
\end{prop}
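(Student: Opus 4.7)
The plan is to pull everything back to the standard model via the contactomorphism $\Psi$ of Theorem~\ref{thm:contacto}, which identifies $Y = M \setminus (\mathrm{Skel} \cup B)$ with $W = (0,\infty)_x \times S^1_y \times S^1_z$, sends $\xi$ to $\ker(dz + x\,dy)$, $V$ to $x\partial_x$, and each Morse diagram torus $\mathcal{T}_i$ to a level $\{x = x_0\}$ on which the torus coordinates $(s,t)$ coincide with $(y,z)$. Because $x\partial_x$ preserves $(y,z)$ and is monotonic in $x$, the flow by $\pm V$ onto $\mathcal{T}_i$ is simply the $(y,z)$-projection, and each point of $\Lambda \cap Y$ has a unique image. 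For an arc $\gamma(u) = (x(u), y(u), z(u))$ of the lift, the Legendrian condition $z'(u) = -x(u)\,y'(u)$ gives projected slope $dt/ds = -x \in (-\infty, 0)$ wherever $y'(u) \neq 0$; points with $y'(u) = 0$ also have $z'(u) = 0$, and a $C^\infty$-small Legendrian perturbation — applied first — puts these tangential singularities in general position as finitely many semicubical cusps and simultaneously makes $\Lambda$ transverse to $\mathrm{Skel}$ in finitely many points.

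The main subtlety is endpoint behavior. Near a point $p$ on the interior of a descending arc of an index-$1$ critical point $q$ on page $\Sigma_t$, choose Liouville-adapted coordinates $(X,Y)$ on the page with $V|_{\Sigma_t} = 2X\partial_X - Y\partial_Y$, so that the ascending arc lies along $\{Y = 0\}$ and the descending arc along $\{X = 0\}$. A direct computation shows that on each of the two sectors $X > 0$ and $X < 0$ adjacent to the descending arc, $\Psi$ takes the local form $(X,Y) \mapsto (x,y) = (\sqrt{|X|}, XY^2)$, up to the global translation sending the origin of the local chart to the appropriate preimage of $q$ on $\{x=0\} \subset \widetilde{W}$. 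Under this map the entire descending arc $\{X = 0\}$ collapses to the single point $(x,y) = (0,0)$ in each local chart, and this point is precisely the $x \to 0$ limit of the ascending arc $\{y = 0, x > 0\}$. Consequently the $(y,z)$-coordinates of a lift of $\Lambda$ approaching $p$ tend to those of the trace point of $q$ at $t = z_p$ — not to an arbitrary preimage of $p$, as a naive analysis might suggest. Thus each endpoint of $\mathcal{F}(\Lambda)$ lies on a trace curve of $q$, with tangent slope $-x \to 0$, and the two sectors around $p$ produce the two paired endpoints on the two paired trace curves of $q$, approached from opposite sides. The main obstacle here is identifying the correct Liouville local model: a generic gradient-like model such as $V = (X,-Y)$ (which is symplectic but not Liouville) would not produce this collapse, and the correct Weinstein normal form is essential.

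Finally, recovery of $\Lambda$ from $\mathcal{F}(\Lambda)$ is immediate once we work in $W$: on each smooth non-cusp piece of the front, the Legendrian lift to $W$ is determined by $x(u) = -dt/ds$ together with the projection $(y(u), z(u)) = (s(u), t(u))$, with limits at cusps and endpoints (where $x \to 0$), and the endpoint pairing then dictates unambiguously how to reconnect the arcs across $\mathrm{Skel}$. Since $\Psi$ is a contactomorphism on $Y$, this reconstruction produces a unique Legendrian in $(M,\xi)$, proving that two Legendrians with the same front projection must coincide.
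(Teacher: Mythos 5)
Your proof is correct and follows essentially the same route as the paper: pass to $W$ via the contactomorphism, get negative slope from $z'=-x\,y'$, produce cusps by a generic perturbation, show that the two cut ends at a transverse crossing of a descending arc flow to the two ends of the dual co-core (hence paired endpoints on paired trace curves, approached from opposite sides, with slope $-x\to 0$), and recover $\Lambda$ from the front via $x=-dt/ds$ together with continuity across $\mathrm{Skel}$. Two caveats about your middle paragraph: the formula $(x,y)=(\sqrt{|X|},\,XY^2)$ cannot literally be a ``direct computation'' of $\Psi$, since $x$ and $y$ are determined by the global flow to the binding chart rather than by the local chart; what is true is that $y$ is flow-invariant, hence of the form $y_{\mathrm{trace}}+u(XY^2)$ with $u$ monotone and $u(0)=0$, while $x$ satisfies $V(x)=x$, hence $x=g(XY^2)\sqrt{|X|}$ with $g>0$, and these give exactly the qualitative conclusions you draw. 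Also, the collapse of the descending arc to the trace point is not special to the Liouville normal form: it is the usual hyperbolic-saddle behavior of any gradient-like vector field (for instance $X\partial_X-Y\partial_Y$, with invariant $XY$, gives the same limits), so the Liouville condition is not what is essential there; the paper simply notes that the cut ends flow to opposite ends of the co-core and invokes the extension $\widetilde{\Gamma}$ over $\{x=0\}$ for the slope-$0$ statement.
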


\begin{proof} After perturbing by a small Legendrian isotopy, we may assume that $\Lambda$ is disjoint from the binding and the critical points of the Morse structure on each page; that $\Lambda$ is transverse to the interiors of the $2$--cells of both $\mathrm{Skel}$ and $\mathrm{Coskel}$; and that $\Lambda$ is tangent to $V$ at only finitely many points. The transverse intersections with $\mathrm{Skel}$ and $\mathrm{Coskel}$ occur at discrete, distinct, values of $t$ which are not handle slide $t$-values.

In this case, the first assertion of the proposition follows from the analogous statement for front projections in $\mathbb{R}^3$ which generalizes immediately to $W$, via the contactomorphism $\Gamma$ defined above.  Recall that the vector field $V$ is identified with $x \partial_x$ under this contactomorphism, so that standard front projection in $W = \mathbb{R}^3/\sim$ corresponds to flowing along $V$ to the front diagram. This contactomorphism also immediately shows that the part of $\Lambda$ disjoint from $\mathrm{Skel}$ can be recovered from $\mathcal{F}(\Lambda)$.  Since $\Lambda$ is determined by $\mathcal{F}(\Lambda)$ outside a discrete set of points where it intersects $\text{Skel}(F,V))$,  continuity implies that it is determined everywhere.
 
If $\Lambda$ intersects the skeleton at a generic page, the  two end points which result from cutting $\Lambda$ at this intersection will flow under $V$ to opposite ends of the co-core dual to the intersecting core.  In the Morse diagram, $\mathcal{F}(\Lambda)$ therefore teleports across the corresponding paired trace curves. To see that the slope of $\mathcal{F}(\Lambda)$ is $0$ near each teleporting endpoint, it suffices to recall the extension of $\Gamma$ defined at the end of Section~\ref{sec:contacto} which maps the plane $x=0$ in $(\mathbb{R}^3, \xi_{\text{std}})$ to the skeleton.
\end{proof}

The discussion above implies that the Legendrian Reidemeister moves familiar from front projection in $\mathbb{R}^3$ carry over to the setting of open book front projections.  However there are a variety of other moves which change the planar isotopy type of the front projection, and these are explored in the  next section

To conclude, recall the statement of Theorem~\ref{thm:fronts} from the introduction:

\begin{thm1.4} 
 Let $\Lambda$ be a Legendrian link in $(M,\xi)$ that is disjoint from the binding,   transverse to $\mathrm{Skel}$, and tangent to $V$ at only finitely many points. Then the image of $\Lambda$ under the flow by $\pm V$ to $\amalg^n (0,\infty) \times S^1 \times S^1$ is a front on the Morse diagram. Furthermore, any front on this Morse diagram is the image of such a Legendrian $\Lambda$, and any two Legendrians with the same front are equal.
\end{thm1.4}

\begin{proof} Proposition~\ref{prop:frontproj}  establishes the first statement in the language developed since the introduction.  The fact that any abstract front is the front projection of  Legendrian knot follows from the fact that one may directly construct such a Legendrian by flowing via $\mp V$ back into the manifold for a time interval given by the slope on the front.  We note that  this allows us to easily construct examples, as any  curves satisfying the conditions of Definition~\ref{def:front} are in fact front projections of Legendrian knots; this is a useful property shared by  other instances of front projection (e.g., ($\mathbb{R}^3, \xi_{\text{std}})$ and universally tight lens spaces)\cite{BG}.

\end{proof}

\subsection{Fronts and Legendrian Isotopy}\label{sec:isotopymoves}

Figure~\ref{fig:moves} shows a collection of moves which change the surface isotopy type of the graph formed by $\mathcal{F}(\Lambda)$ and the trace curves on a Morse diagram; in case the figure is not self-explanatory, these moves will be described in more detail in the proof of Theorem~\ref{thm:frontmoves} below.  These moves, together with surface isotopy preserving the property of being fronts and  the ordinary Legendrian Reidemeister moves \cite{Et2} for front projections, make up the complete set of \textit{isotopy moves}.

\begin{thm1.5} Two Legendrian links in $(B, \pi, F, V)$ are Legendrian isotopic if and only if their fronts are related by a sequence of isotopy moves.  
\end{thm1.5}   

\begin{proof}

%The structure of our argument is as follows.  A Morse  structure decomposes a contact manifold into a collection of  solid tori, and we consider separately isotopies restricted to a single such piece and those which move between components of $M\setminus \text{Skel}$. In the former case, we make use of the explicit contactomorphism  to the model space $W$, while in the latter case, we rely on transversality and continuity to describe isotopy between components.

We need to prove two things: (1) each isotopy move on a front in fact corresponds to a unique Legendrian isotopy on the corresponding Legendrian, and  (2) any Legendrian isotopy $\Lambda_u$ between Legendrian links $\Lambda_0$ and $\Lambda_1$ can be perturbed so that there is a sequence of isotopy parameter values $0=u_0<u_1 < \ldots < u_m = 1$ such that, for each $i$, $\mathcal{F}(\Lambda_{u_i})$ is a front and differs from $\mathcal{F}(\Lambda_{u_{i-1}})$ by a single isotopy move. As we go through the proof below, we will describe each move in sufficient detail that (1) will be clear from the description of the move. 

To prove (2), we begin by noting that for any open cover $\{U_j\}$ of $M$, we can perturb the isotopy so as to ensure the following: there is a suitably small open cover $\{V_k\}$ of the domain $\amalg^p S^1$ of $\Lambda_0$ and a suitably small open cover $\{W_l\}$ of the parameter space $[0,1]$ such that, for each $u_* \in [0,1]$, there is at most one $W_l \times V_k$, with $u_* \in W_l$, on which $\Lambda_u$ is not independent of $u$, and the image of $W_l \times V_k$ is contained in exactly one $U_j$; i.e., the motion of $\Lambda_u$ happens entirely inside $U_j$. Furthermore, we may assume sufficient transversality so that, for all but finitely many values of $u$, $\Lambda_u$ satisfies all the genericity conditions spelled out in the first paragraph of the proof of Proposition~\ref{prop:frontproj}. At those finitely many values of $u$, exactly one of these genericity conditions will be violated, and this violation will occur in a transverse fashion. (To properly set up this transversality one should work with the Legendrian isotopy as a map of $[0,1] \times \amalg^p S^1$ into $[0,1] \times M$.) Both the statement about the isotopy being constant outside small $W_l \times V_k$'s and the transversality statement follow from the fact that Legendrian curves and Legendrian isotopies have as much local flexibility as curves and isotopies of curves in $\mathbb{R}^2$, thanks to Darboux's theorem and the standard theory of fronts and Legendrians in $\mathbb{R}^3 \to \mathbb{R}^2$.

The genericity conditions from the proof of Proposition~\ref{prop:frontproj}, which will be violated transversely, are as follows:
\begin{itemize}
 \item $\Lambda$ is transverse to the interiors of the $2$--cells of $\mathrm{Coskel}$. 
 \item $\Lambda$ is disjoint from the binding.
  \item $\Lambda$ is transverse to the interiors of the $2$--cells of $\mathrm{Skel}$.
 \item $\Lambda$ is disjoint from critical points of the Morse structure on each page.
  \item The transverse intersections with $\mathrm{Skel}$ and $\mathrm{Coskel}$ do not occur at handle slide $t$-values.
\end{itemize}

These are ordered so as to correlate with moves shown in Figure~\ref{fig:moves}.

We will see that each move addresses the failure of one of these conditions. Because of our assumption about how the isotopy changes with respect to a cover of $M$, we can always assume that the transverse violation of each condition occurs in some standard chart, so we simply present a standard model for each failure and show that it corresponds to one of the isotopy moves.

   \begin{figure}%[h]
\begin{center}
\scalebox{.75}{\includegraphics{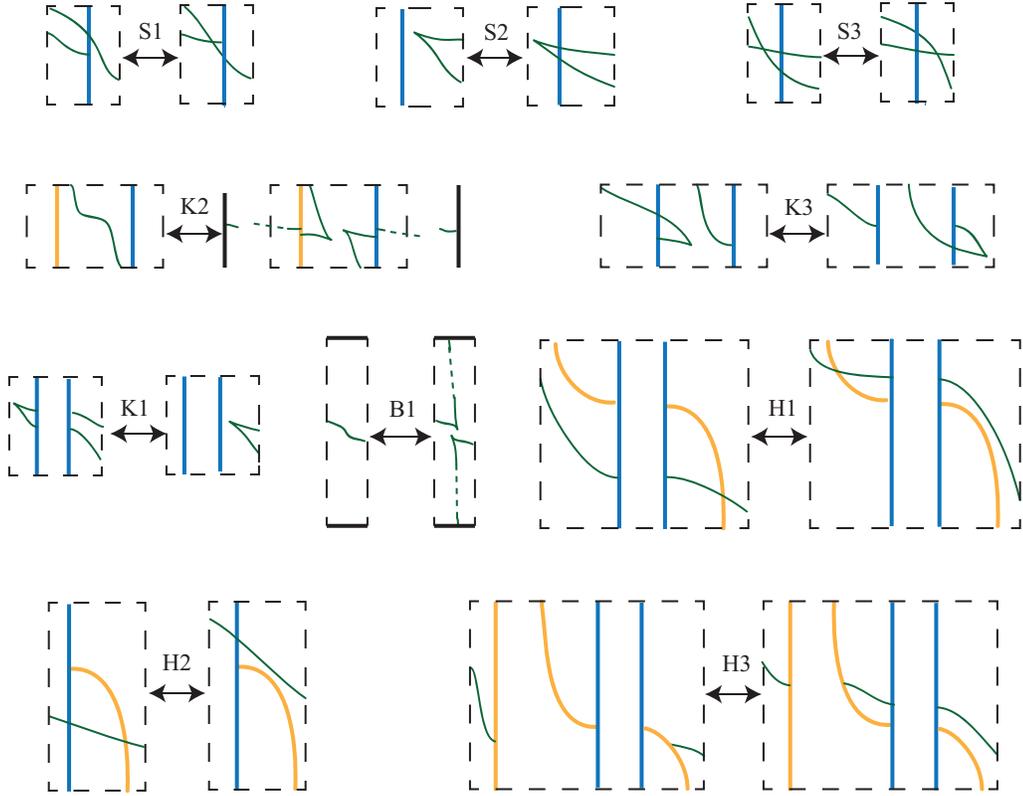}}
\caption{  These moves, together with their reflections preserving the negative slope of $\mathcal{F}(\Lambda)$, join the Legendrian Reidemeister moves to give a complete set.  On each partial Morse diagram, $t$ is vertical and $s$ is horizontal. Note that moves K2 or B1 are not completely local, as  K2 potentially creates  many teleports and the right hand picture of B1 may have lots of new crossings.}\label{fig:moves}
\end{center}
\end{figure}

\subsubsection{Isotopies within standard solid tori}

Recall that the solid torus $W$ is a quotient of $\big(\mathbb{R}^3, \ker (dx+xdy) \big)$. Front projection to an $x=c$ plane is classically understood, and the contactomorphism $\Gamma$ identifies the Morse diagram torus $ \mathcal{T}_i$  with the image of such a plane.  This immediately implies that Legendrian isotopies confined to a single component of $M\setminus \text{Skel}$ can be viewed as the image of Legendrian isotopies in the standard contact $\mathbb{R}^3$.  Thus the list of isotopy moves must contain the ordinary Legendrian Reidemeister moves for front projections, respecting cusps and teleporting endpoints.  However, Legendrian isotopy can give rise to new moves changing the planar isotopy type of this graph formed by $\mathcal{F}(\Lambda)$ and the trace curves.  

\textbf{Move S1:} Translation in the $t$ direction is  a Legendrian isotopy.  If  there is a segment of $\mathcal{F}(\Lambda)$ which crosses a trace curve at height $t_1$ and another segment which teleports across it at height $t_1\pm \epsilon$, any  local isotopy which would move the crossing point past $t_1\pm \epsilon$ in the absence of the teleporting point is allowed to pass the crossing through the teleporting point.

  \textbf{Move S2:}Translation in the $s$ direction is  also a Legendrian isotopy, but such a translation may result in an instantaneous tangency between $\Lambda$ and the interior of a $2$-cell of Skel.  In the front projection, this appears as cusp may passing through a trace curve, and we denote this move by $S2$. 
 
 \textbf{Move S3:} Since translation in the $s$ direction is a Legendrian isotopy, a crossing on the front projection can pass through a trace curve.  Note that although this move changes the planar isotopy type of the graph on the Morse diagram, it does not represent a failure of $\Lambda$ to be generic.

In fact, the move H1 shown in Figure~\ref{fig:moves} also arises from an isotopy restricted to a single solid torus, but we include this discussion with the rest of the  handle slide moves below.

\subsubsection{Isotopy across the binding}

The requirement that $\Lambda$ be disjoint from the binding  leads to a new move on the front projection associated to a Legendrian isotopy passing $\Lambda$ through $B$.  Every component of the binding has a standard contact neighborhood, and in fact, this move has already appeared in the literature in the case of front projections of Legendrian knot in universally tight lens spaces.  See, for example \cite{BG}.

\textbf{Move B1:} Let $\gamma$ be a nearly vertical segment of $\mathcal{F}(\Lambda)$.  Recall that we can ensure an arbitrarily negatives slope by  isotoping the corresponding segment of $\Lambda$ to lie sufficiently close to the binding.  Then $\gamma$ may be replaced by its approximate vertical complement, connected to $\mathcal{F}(\Lambda)\setminus \gamma$ by a pair of cusps.   
 
\begin{rem}A useful consequence of  B1 is that $\Lambda$ is always Legendrian isotopic to some $\Lambda'$ contained in the complement of a fixed page, and further, that we may easily construct a front projection  $\mathcal{F}(\Lambda')$ by applying this move  to all intersection of $\mathcal{F}(\Lambda)$ with some curve of fixed $t$ value. 
\end{rem}

\subsubsection{Isotopy across the skeleton}
In the remainder of the proof, we examine the rest of the way in which an isotopy can violate one of the genericity conditions established above.  Such violations occur when an isotopy moves a segment of $\Lambda$ between two components of $M\setminus \text{Skel}$. 

\textbf{Move K1:} 
We first consider the effect of pushing an arc of $\Lambda$ across the skeleton, violating the generic conditions that $\Lambda$ be transverse to the interior of the $2$-cells of Skel.  Recall that each solid torus  is contactomorphic to a quotient $W$ of the subset of $\mathbb{R}^3, \xi_{\text{std}})$ defined by $0< x \leq 1$, where a sequence whose limit lies on $x=0$ maps to a sequence whose limit lies on the skeleton.  In $\mathbb{R}^3$, an arc lying completely in the region where $x>0$ can approach $x=0$ only if its front projection has a cusp; thus in the open book case, the front projection of a Legendrian arc approaching the skeleton shows a cusp approaching a trace curve.  The skeleton separates two solid tori, playing the role of the $x=0$ plane for each of them, so after the arc has passed through the point of tangency, we can understand its front projection by considering a Legendrian arc in $\mathbb{R}^3$ with both endpoints on $x=0$ and its interior lying in $x>0$.  Again, the front projection of such an arc must have a cusp, and if we restrict our isotopy sufficiently, then there will be only one such cusp.  Thus Move K1 teleports a cusp across a trace curve.  

\begin{remark} Analyzing the characteristic foliation offers an alternative proof that the front projection of a Legendrian arc crossing the skeleton  results in a teleporting cusp; we will use this perspective in the proof of the next move and in the study of isotopies across handle slides.
\end{remark}

\textbf{Move K2:} 
Next, we consider the effect of a Legendrian isotopy passing $\Lambda$ through an index $0$ critical point.  The flowlines on each page are the leaves of the characteristic foliation of  $\ker \alpha$, so the union of index $0$ critical points on all the pages is easily seen to form a transverse knot in $M$.  Some tubular neighborhood of any transverse knot is standard, which implies the existence of a Legendrian isotopy moving $\Lambda$ across $c_0\times S^1$.  If we assume that the initial segment is disjoint from the skeleton, then the new connecting segment will intersect the piece of the skeleton above each core curve twice.  On the front projection, this corresponds to replacing a nearly horizontal segment of $\mathcal{F}(\Lambda)$ by a nearly horizontal curve which teleports across every pair of trace curves twice and connects to the rest of the projection via a pair of cusps.  This move is analogous to Move B, reflecting the fact that both involve isotopy across a transverse knot.

\textbf{Move K3:}
Finally, consider an isotopy in which $\Lambda$ crosses an index $1$ critical point $c_1$; after possibly applying  K1, we may assume that $\gamma\subset \Lambda$  intersects both the skeleton and the co-skeleton once near $c_1$.  As in the discussion of K2 above, we note first that the union of index $1$ critical points with the same label form a transverse curve in $M$, and thus there is some Legendrian isotopy passing $\gamma$ through this curve.  We argue first that the image of $\gamma$ after some isotopy of this type intersects the core and co-core on the other side of the critical point as shown in move $K3$.  

In fact, our argument that the front move K3 represents a valid Legendrian isotopy is essentially one of continuity.  As long as $\gamma$ remains distinct from the critical point $c_1$, the ``before" and ``after" pictures in K3 depict Legendrian knots.  As $\gamma$ approaches $c_1$ from either side, the slopes of the cusps approach $0$ and the cusps themselves approach the trace curve.  The pointwise limit of both front diagrams is a curve with an everywhere negative slope which teleports across the paired trace curves at the point where the cusps approach.  This is the front projection of a Legendrian curve interpolating between the initial and final figures in the move K3.

Having established that K3 represents a valid front projection move, we would like to conclude that it models any generic isotopy of $\Lambda$ across an index $1$ critical point.  Consider a $t$-interval sufficiently small that the characteristic foliation near $c_1$ varies only by isotopy, and consider the Lagrangian projection of the isotopy shown in K3.  See Figure~\ref{fig:K3}.  Observe that there is a single point of tangency between the projection of $\gamma$ and the characteristic foliation which corresponds to the cusp.  We may assume such points of tangency are isolated, and perhaps after conjugating by a pair of K1 moves, we may restrict the isotopy under consideration to a sufficiently small neighborhood in $M\times I$ to include the passage of just a single such tangency across $c_1$, recovering K3 as desired.

Note that this move has two versions depending on whether the initial cusp is an ``up" or ``down" cusp; this property is preserved by the isotopy.

 \begin{figure}%[h]
\begin{center}
\scalebox{.75}{\includegraphics{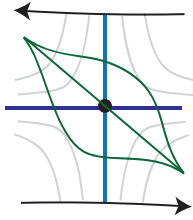}}
\caption{ Passing a segment of $\Lambda$ across an index $1$ critical point.}
\label{fig:K3}
\end{center}
\end{figure}

\subsubsection{Handle slide moves}

Although the vector field $V$ (and hence, the contact structure) changes continuously in the $t$-direction, the evolution of the flowlines is not continuous as the $t$ parameter passes through a handle slide value.   For segments of $\Lambda$  away from the handle-sliding curves, the front projection changes only by the moves described above, so in this final section we study the effect on the front projection of an isotopy passing $\Lambda$ through the cores and co-cores on a page where a handle slide occurs.  If  $t_*$ is a handle slide value, then on $\Sigma_{t_*}$ there is a flowline from an  index $1$ critical point $c_1$ to an index $1$ critical point $b_1$.  Let $X$ denote the union of this flowline together with the descending manifold from $c_1$ and the ascending manifold from $b_1$.      

Suppose that $u$ parameterizes a Legendrian isotopy which passes a segment $\gamma\subset\Lambda$  transversely through $X$ in the complement of the critical points, and let $u_*$ denote the value of the isotopy parameter  $\gamma_{u_*}$ meets  $X$.  Since transversality of curves is an open condition,  there exist  non-empty intervals $[u_1,u_2]\owns u_*$  and $[t_1,t_2]\owns t_*$ such that $\gamma(u)$ is transverse to the curves of the characteristic foliation on $\Sigma_t$ for all $u\in [u_1,u_2]$ and $t\in [t_1, t_2]$. Up to isotopy preserving intersections with the characteristic foliation, it follows that the projections of $\gamma_{u_i}$ to $\Sigma_{t_i}$ agree with the numbered arcs in Figure~\ref{fig:charfol}.   Treating a given arc as fixed, we examine how its intersections with flowlines of $V$ change as the handle slide occur.  The front projection of the numbered arc before and after the isotopy may be read off from each of these pictures; t arcs $1$ and $2$ correspond to move H3, arc $3$ corresponds to move H1,  and arcs $4$ and $5$ correspond to move H2. 

   \begin{figure}%[h]
\begin{center}
\scalebox{.55}{\includegraphics{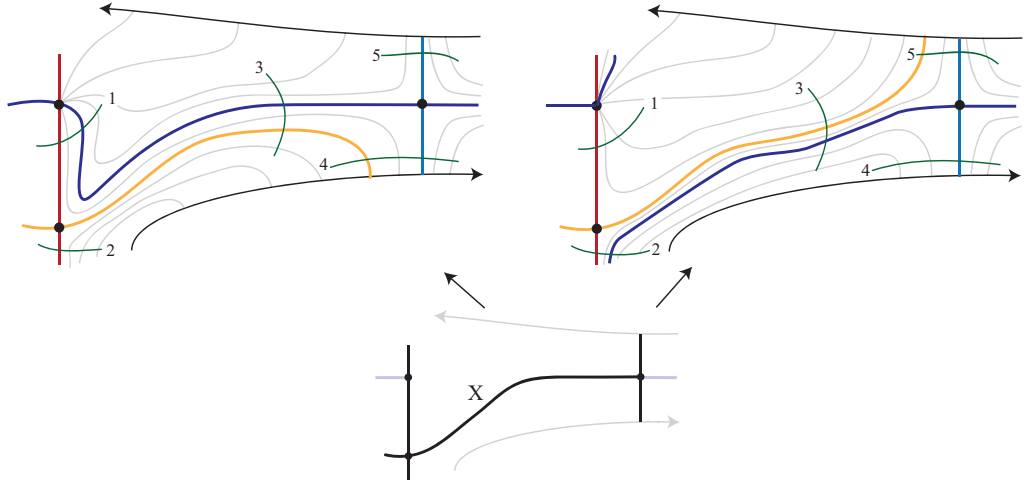}}
\caption{ Moves H2, H2, and H3 come from comparing the intersection of a fixed numbered arc with the characteristic foliation before and after a handle slide.}\label{fig:charfol}
\end{center}
\end{figure}

\end{proof}

\section{Computing the Thurston-Bennequin number}\label{sec:tb}
As an application of the front projection introduced in the preceding section, we define an algorithm to construct a Seifert surface for a nullhomologous Legendrian knot $\Lambda$ and use this to compute the Thurston-Bennequin number of $\Lambda$ from its front projection  $\mathcal{F}(\Lambda)$.  In the remainder, we assume that $\Lambda$ is oriented. Throughout this section, we let $\Sigma$ denote the abstract compact surface homeomorphic to the embedded page $\Sigma_0$.
  
\subsection{Detecting nullhomologous knots}\label{sec:nullhom}
In this section we explain how the front projection $\mathcal{F}(\Lambda)$ can be used to detect when  $\Lambda$ is nullhomologous in the closed $3$-manifold.  The existence of move B1, which isotopes a Legendrian curve across the binding, implies that we may assume $\Lambda$ is disjoint from $\Sigma_0$; in fact, we will assume that $\Lambda$ lies in $\Sigma\times [\epsilon,1-\epsilon]$ for some $\epsilon>0$.

Let $P=\{p_1, \dots p_n\}$, where $p_i$ is a point on the $i^{th}$ boundary component of $\Sigma$ in the complement of  the co-cores for $(f_0,V_0)$. The inclusion of $\Sigma$ into $M$ induces a surjective map $H_1(\Sigma)\rightarrow H_1(M)$.  In order to describe the kernel $K$ of this map, we consider a different inclusion-induced map, $i_*:H_1(\Sigma)\hookrightarrow H_1(\Sigma,P)$. Note that $i_*$ is injective, since $H_1(P)=0$. The monodromy $\phi$ induces a map $\phi_*:H_1(\Sigma,P)\rightarrow H_1(\Sigma, P)$,  and $K$ is equal to  $\text{im}(\phi_*-\text{id})\subset H_1(\Sigma)$. Note that  $\text{im}(\phi_*-\text{id})\subset H_1(\Sigma)$ because $\phi$ fixes $P$.

 With the assumption that $\Lambda\cap \Sigma_0=\emptyset$, we may define a projection $\pi: \Lambda\rightarrow \Sigma_0$. It follows that $\Lambda$ is nullhomologous in $M$ if and only if $[\pi(\Lambda)]\in K\subset H_1(\Sigma_0)$, and we next explain how to detect the homology class of  $\pi(\Lambda)$ from $\mathcal{F}(\Lambda)$.

First, we choose a set of generators for $H_1(\Sigma_0, P)$ as follows:  once oriented, the cores form a basis for $H_1(\Sigma_0)$, and we extend this to a generating set for $H_1(\Sigma_0, P)$ by including a flowline from the index $0$ critical point  to each marked point $p_i$ on the boundary.  The concatenation of any pair of flowlines, one with its orientation reversed, provides the desired generating set.  For convenience, choose the marked point $p_i$ so that the planar presentation of each component of the Morse diagram has $p_i\times [0,1]$ as its left (and right) vertical edge.  For later convenience, we will orient the left edge up and the right edge down; this will allow us to formally treat the edges as if they were  paired trace curves.

The  co-cores are dual to this basis for $H_1(\Sigma_0)$, so counting signed intersections with the co-cores detects the first homology class of any closed curve on $\Sigma_0$.  Intersections with co-core curves are in bijection with intersections between $\mathcal{F}(\Lambda)$ and the trace curves, so with a little bookkeeping, we may compute the homology class of $\pi(\Lambda)$ from the front projection.  

Each oriented co-core has an incoming and an outgoing end; on the Morse diagram, orient the trace curve corresponding to the former upwards and the trace curve corresponding to the latter downwards.   It follows that if $\Lambda$ lies in $\Sigma\times [0,a]$ for $a$ less than the smallest handle slide $t$-value, counting its signed intersections with the trace curves  computes  its homology class in $H_1\big(\Sigma\times [0,1]\big)$ in terms of the initial basis. (Note that handle teleporting points are not intersections, since  teleporting across a pair of trace curves implies that the corresponding segment of $\Lambda$ is disjoint from the corresponding co-core.)

In order to extend this technique to $\Lambda\subset \big(\Sigma \times[\epsilon, 1-\epsilon]\big)$, we need to know what homology class to assign to an intersection between $\mathcal{F}(\Lambda)$ and  a trace curve after some number of handle slides  have occurred.  Suppose that handle $B$ slides over handle $A$; since the trace curves come from co-cores, this appears one of the trace curves labelled $A$ teleporting from one trace curve labelled $B$ to the other trace curve labelled $B$. Suppose also that this $A$ trace curve and the $B$ trace curve at which $A$ enters the teleport have the same orientation in the Morse diagram.  Then after the handle slide the corresponding trace curves come from co-cores  dual to the classes $A$ and $A+B$, respectively.   This is shown in the example on the right in Figure~\ref{fig:label}.  On the other hand, if $A$ and $B$ have opposite orientations at the teleport, then the resulting co-cores will be dual to the classes $A$ and $A-B$. 

   \begin{figure}%[h]
\begin{center}
\scalebox{.8}{\includegraphics{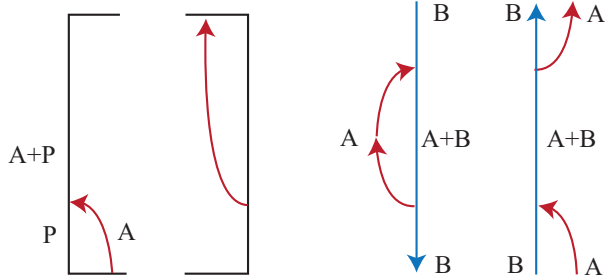}}
\caption{ Labeling the trace curves and vertical edges by homology class.  }\label{fig:label}
\end{center}
\end{figure}

%check and edit this para
Similarly, label the initial arc from $p_i$ to $p_j$ by the homology class $P_{ij}\in H_1(\Sigma_0,P)$.  If a co-core labelled $A$ crosses the marked point $p_i$ --- on the trace diagram, this looks like a ``handle slide" across the paired left and right edges of the planar Morse diagram --- the corresponding arc on the surface changes from the class $P_{ij}$ to the class $P_{ij}+A$, just as if the edges were in fact trace curves. Similarly, we label the edge of each component of the Morse diagram by the half-edge $P_i$, with the understanding that a class in $H_1(\Sigma, P)$ is represented by a difference of edges.  To compute the kernel, we again take the difference between top and bottom labels, but this time, of  differences of edges.

Split each trace curve and diagram edge into intervals separated by  teleporting points of the trace curves  and label each interval with the appropriate homology class. 
In order to determine the homology class of  $\Lambda$ in $\Sigma\times [0,1]$, it suffices to sum the signed labels of the intersection points between $\mathcal{F}(\Lambda)$ and the trace curves.

To determine the homology class of the same knot, now viewed as a submanifold of $M$, we compute the image of this under the quotient by $\text{im}(\phi_*-\text{id})$.    We depict the Morse diagram as planar for convenience, but of course, the top and bottom edges are identified:  $\phi_*$ applied to the $t=0$ label is the $t=1$ label.  Thus the differences between  the labels at the top and bottom of each trace curve and vertical edge generate $\text{im}(\phi_*-\text{id})$. 

   \begin{figure}%[h]
\begin{center}
\scalebox{.6}{\includegraphics{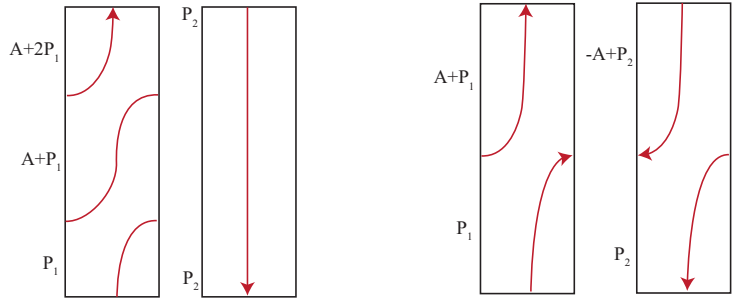}}
\caption{Two Morse diagrams for $L(2,1)$.  On the left, we compute that $K$ is generated by
$(P_1-P_2)-\big(A+2P_1-P_2\big)$, and on the right, by $(P_1-P_2)-\big(A+P_1)-(-A+P_2)\big)$. }\label{fig:l21}
\end{center}
\end{figure}

This discussion establishes the following lemma:

\begin{lemma} With  notation as above, the following statements hold:
\begin{enumerate}
\item $H_1(\Sigma \times [0,1]) \cong H_1(\Sigma)$ is  generated by the trace curve labels along the bottom edge. 
\item $K = \text{im}(\phi_*-\text{id})$ is generated by the differences between the top and bottom labels, including the labels of the vertical edges.   Although $P_i$ labels appear at the top and bottom, they will necessarily cancel out in the differences. 
\item $H_1(M)\cong H_1(\Sigma) \slash K$. 
\item  $[\pi(\Lambda)] \in H_1(M)$ is equal to the signed sum of labels of intersections between $\mathcal{F}(\Lambda)$ and the trace curves. If this signed sum is $0$ then $\Lambda$ is nullhomologous in $\Sigma \times [0,1]$.  If this signed sum lies in $K$, then $\Lambda$ is nullhomologous in $M$.
\end{enumerate}
\end{lemma}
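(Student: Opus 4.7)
The plan is to verify the four claims in order, using the duality between cores and co-cores and a standard Mayer--Vietoris argument for the mapping torus.

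For claim (1), I would begin by recalling that the cores $\gamma_1,\ldots,\gamma_g$ of the initial handle decomposition of $\Sigma$ at $t=0$ form a basis of $H_1(\Sigma)$, and the bottom endpoints of the trace curves arise as the intersections of the dual co-cores $\beta_1,\ldots,\beta_g$ with the boundary torus. Since the labeling convention assigns each such trace curve the class of the core dual to its generating co-core, the bottom labels are exactly the basis $\{[\gamma_i]\}$. Together with the obvious deformation retraction $\Sigma \times [0,1] \simeq \Sigma$, this gives (1).

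For claim (2), the idea is that the top edge of $\Sigma \times [0,1]$ is identified with the bottom via $\phi$. First I would verify by induction on handle slides that the label on a trace curve at height $t$ is the class of the core dual to the corresponding co-core of $(f_t,V_t)$; the key computation is purely combinatorial: when $B$ slides over $A$, duality forces the new core dual to the unchanged co-core to be $B \pm A$, matching the labeling rule given in the paragraph above Figure~\ref{fig:label}. At $t=1$, then, the top label on the trace curve that started as $[\gamma_i]$ is $\phi_*[\gamma_i]$, so the differences of top and bottom labels are precisely $(\phi_*-\mathrm{id})[\gamma_i]$, which span $K$. For the vertical edges at the marked points $p_i$, the same bookkeeping applies relative to $H_1(\Sigma,P)$; because $\phi$ is the identity on $\partial\Sigma$, the endpoint symbols $P_i$ are fixed and cancel in the top-minus-bottom differences, so the $P$-components contribute nothing new and the result lands in $H_1(\Sigma)$.

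For claim (3), I would apply Mayer--Vietoris to the decomposition of $M$ into a mapping torus $N$ and the binding solid tori. The standard computation gives $H_1(N) \cong H_1(\Sigma)/K \oplus \mathbb{Z}\langle t\rangle$; then gluing each solid torus along meridian $\partial_i\Sigma$ kills the boundary classes (which lie in $\ker(\phi_*-\mathrm{id})$ since $\phi$ fixes $\partial \Sigma$ pointwise) and identifies longitudes with the $t$-direction. The essential content for our application is that the map $H_1(\Sigma) \to H_1(M)$ factors as $H_1(\Sigma) \twoheadrightarrow H_1(\Sigma)/K \to H_1(M)$, which is all claim (4) requires.

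For claim (4), the setup $\Lambda \subset \Sigma\times[\epsilon,1-\epsilon]$ lets us project to a single page: $[\pi(\Lambda)] \in H_1(\Sigma)$ is well-defined. Since the cores and co-cores are dual bases under the intersection pairing, $[\pi(\Lambda)] = \sum_j (\pi(\Lambda)\cdot \beta_j)[\gamma_j]$. Under flow along $\pm V$, intersections of $\pi(\Lambda)$ with co-cores on a generic page correspond bijectively, and with signs preserved, to intersections of $\mathcal{F}(\Lambda)$ with trace curves. Because the labels on trace curves track exactly the dual-core classes at all heights (by the induction in claim (2)), the signed sum of labels at these intersections computes $[\pi(\Lambda)]$ in $H_1(\Sigma)$. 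Reducing modulo $K$ via claim (3) then computes the image in $H_1(M)$, so a signed sum equal to zero in $H_1(\Sigma)$ gives nullhomology in $\Sigma\times[0,1]$, and a signed sum lying in $K$ gives nullhomology in $M$.

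The main obstacle will be the bookkeeping in claim (2): carefully verifying that each handle-slide teleport in the Morse diagram truly updates the labels in accordance with the $2$-dimensional duality relation, and checking the orientation cases (whether the slide contributes $+B$ or $-B$) so that the labels at the top are genuinely $\phi_*$ of the labels at the bottom. Everything else is formal once the labeling convention is pinned down.
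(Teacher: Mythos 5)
Your overall framework (dual core/co-core bases, labels updated through handle slides, signed label sums computing $[\pi(\Lambda)]$) matches the paper's discussion, but there is a genuine gap in your claim (2), and it propagates to the final sentence of claim (4). You assert that the top-minus-bottom differences of the trace-curve labels, i.e. $(\phi_*-\mathrm{id})[\gamma_i]$ for the core basis of $H_1(\Sigma)$, already span $K$, and that the vertical edges ``contribute nothing new.'' But $K$ is $\mathrm{im}(\phi_*-\mathrm{id})$ for $\phi_*$ acting on the \emph{relative} group $H_1(\Sigma,P)$, not on $H_1(\Sigma)$: the classes $(\phi_*-\mathrm{id})(\gamma_{ij})$, where $\gamma_{ij}$ is an arc joining marked points on distinct boundary components, are in general not contained in $\mathrm{im}(\phi_*-\mathrm{id}|_{H_1(\Sigma)})$, and it is exactly these that the vertical-edge labels record. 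A concrete counterexample to your generating set is the annulus page with monodromy a power of the Dehn twist about the core (the paper's $L(2,1)$ and overtwisted $S^3$ examples, Figures~\ref{fig:l21} and~\ref{fig:Xcurves}): there $\phi_*=\mathrm{id}$ on $H_1(\Sigma)\cong\mathbb{Z}\langle A\rangle$, so every trace curve has equal top and bottom labels and your proposed generators of $K$ all vanish, yet $H_1(L(2,1))\cong\mathbb{Z}/2$ and $H_1(S^3)=0$ force $K=\langle 2A\rangle$ and $K=\langle A\rangle$ respectively; these generators arise precisely as $(\phi_*-\mathrm{id})$ of the arc from $p_1$ to $p_2$, i.e. from the difference of vertical-edge labels (the paper's computation in Figure~\ref{fig:l21} gives exactly $\pm 2A$ this way). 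With your smaller $K$, the criterion ``signed sum lies in $K$ $\Rightarrow$ $\Lambda$ nullhomologous in $M$'' would be wrong.

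The Mayer--Vietoris step in your claim (3) is where this goes astray and also contains its own error: in an open book the meridian of each binding solid torus is the page-fibration direction $\{p_i\}\times S^1$ on the boundary of the mapping torus, while $\partial_i\Sigma$ is a longitude, isotopic to the binding component, which in general survives in $H_1(M)$ (it generates $H_1(S^1\times S^2)\cong\mathbb{Z}$ for the trivial annulus open book). So the gluing does not ``kill the boundary classes''; it kills the classes $[\{p_i\}\times S^1]$, and since $[\{p_i\}\times S^1]-[\{p_j\}\times S^1]=\pm(\phi_*-\mathrm{id})[\gamma_{ij}]$ in the mapping torus, killing all of them is exactly what produces the extra relative-class relations missing from your $K$. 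Relatedly, $H_1(N)\cong H_1(\Sigma)/K\oplus\mathbb{Z}$ is not correct as stated; the mapping torus alone only imposes $\mathrm{im}(\phi_*-\mathrm{id}|_{H_1(\Sigma)})$. Once $K$ is corrected to include the vertical-edge differences, your claims (1), the handle-slide label bookkeeping (which is sound, and is the same duality computation the paper invokes around Figure~\ref{fig:label}), and the intersection-count argument in claim (4) go through as in the paper.
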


\begin{example} We illustrate this with an open book with punctured torus pages,  shown in Figure~\ref{fig:ex3} with the trace curves assigned their homological labels. The signed sum of the intersection labels between $\mathcal{F}(\Lambda)$ and the trace curves is $-2A+B$.  The top and bottom labels agree for all but the downward pointing blue trace curve, so $\text{im}(\phi_*-\text{id})$ is generated by the difference between these two labels: $A$.  $H_1(M)\cong \mathbb{Z}$, and $\Lambda$ generates the first homology of the manifold.

   \begin{figure}%[h!]
\begin{center}
\scalebox{.8}{\includegraphics{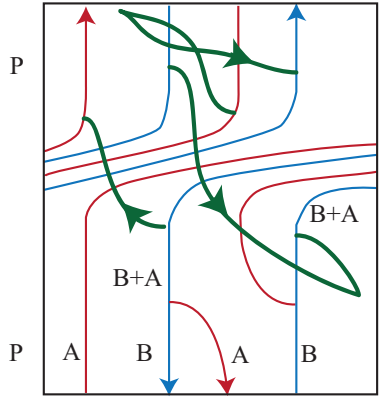}}
\caption{ $[\Lambda]$ is  dual to the co-core labeled $A$.   }\label{fig:ex3}
\end{center}
\end{figure}

\end{example}

\subsubsection{The total writhe}\label{sec:writhe}
Once we determine  that a knot contained in the cylinder $\Sigma\times [\epsilon,1-\epsilon]$ is nullhomologous, we can compute its Thurston-Bennequin number.  In order to do so, we will define a quantity called the \textit{total writhe} of the front projection.  This is closely related to the ordinary writhe of a diagram, but it also counts some crossings between $\mathcal{F}(\Lambda)$ and the trace curves.

Suppose first that $[\pi(\Lambda)]=0\in H_1(\Sigma)$.  
 Assign signs to each teleporting endpoint of $\mathcal{F}(\Lambda)$ as shown in Figure~\ref{fig:signs} and divide each trace curve into intervals separated by these teleporting endpoints of $\mathcal{F}(\Lambda)$ and by teleporting trace curves.  (For clarity, we will refer to the latter as handle slides in the remainder of this section.)  Assign multiplicities to each interval as follows.  Assign the bottom interval of each trace curve multiplicity $0$. 

If  there are no handle slides,  define the multiplicity of any interval to be the sum of the multiplicity of the interval below and $1$ times the sign of the teleporting endpoint separating them.  Observe that corresponding intervals on paired trace curves will have multiplicities of the same absolute value, but of opposite signs.   

   \begin{figure}%[h]
\begin{center}
\scalebox{.7}{\includegraphics{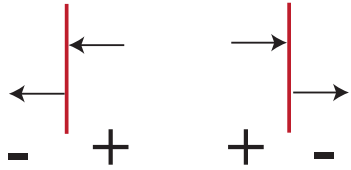}}
\caption{ Incoming arrows are labeled ``$+$" and outgoing arrows are labelled ``$-$".    }\label{fig:signs}
\end{center}
\end{figure}

When a handle slide occurs, add the multiplicities of the two converging branches to define the multiplicity of the interval above the trivalent point.  This, together with the condition on matching multiplicities of paired trace curves, determines the multiplicity of the two branches emanating from the paired trivalent point. See Figure~\ref{fig:multex}.

\begin{figure}%[h]
\begin{center}
\scalebox{.65}{\includegraphics{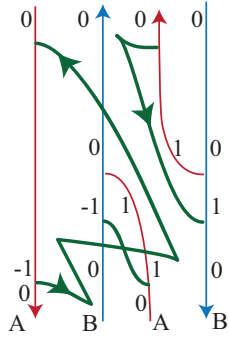}}
\caption{ A Morse diagram for a nullhomologous knot indicating  the multiplicities of each trace curve interval.}\label{fig:multex}
\end{center}
\end{figure}

Let $\text{P}$ and $\text{N}$ denote the total number of positive and negative self-crossings of $\mathcal{F}(\Lambda)$, respectively.  Let $T$ denote the sum of signed  crossings between $\mathcal{F}(\Lambda)$ (viewed as the over strand) and the upward-oriented trace curves (viewed as the under strand) and weighted by the multiplicity of each trace curve interval where a crossing occurs.

\begin{defn} When $\Lambda$ is nullhomologous in $\Sigma\times[0,1]$, the \textit{total writhe} of $\mathcal{F}(\Lambda)$ is the sum 
\[ \text{W}\big(\mathcal{F}(\Lambda)\big)=P-N+T.\]
\end{defn}

In the case that $[\pi(\Lambda)]\neq 0 \in H_1(\Sigma)$, we introduce a link $X$ with the properties that $X$ and $\Lambda$ are homologous, and $X\cup \Lambda$ is nullhomologous in $\Sigma\times[0,1]$, as above.  In fact, the link $X$ may be easily represented by adding curves with one of two standardized forms to the Morse diagram, as we describe next:
\begin{enumerate}
\item A Type 1 pair consists of a curve lying in the $t$-interval $(1-\epsilon, 1]$ together with its orientation-reversed translation to the $t$-interval $[0,\epsilon)$. The endpoints of each curve should form a teleporting pair.
\item Each member of a Type 2 pair  is a curve which begins on the left-most trace curve on a component of the Morse diagram at some $t$-value  in $(1-\epsilon, 1)$, travels left nearly to the edge of the diagram, travels vertically down to a $t$-value in $(0, \epsilon)$, and then travels right to terminate on the left-most trace curve.  The two curves of a Type 2 pair should lie on different components of the trace diagram, and should have opposite orientations along their vertical segments.
\end{enumerate}

\begin{figure}%[h]
\begin{center}
\scalebox{.6}{\includegraphics{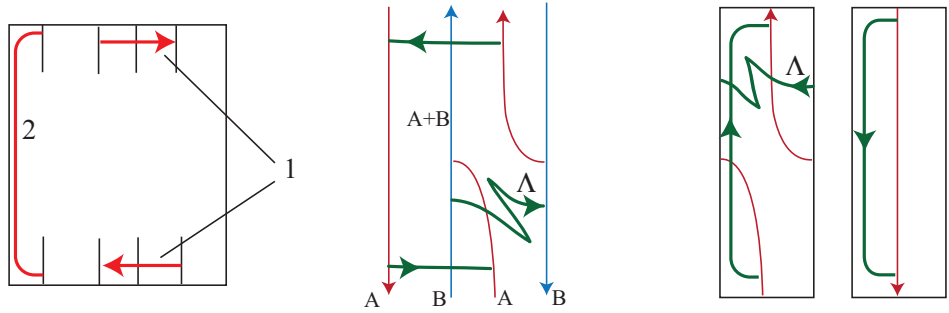}}
\caption{ Left: One curve of a Type 2 pair and both elements of  Type 1 pair.  Center and Right: Examples of $\mathcal{F}(\Lambda\cup X)$. }\label{fig:Xcurves}
\end{center}
\end{figure}

Let $\mathcal{F}(X)$ be a collection of curves of Types 1 and 2 such that $[\pi(X)]=-[\pi(\Lambda)]\in H_1(\Sigma)$, as computed by summing the labels of all intersection points with the labelled trace curves. That such a link exists is easily seen from the following argument: choose a Seifert surface for $\Lambda$ and cut it along its  intersection with $\Sigma_0\cup N(B)$ and pull the resulting surface into the interior of the cut manifold. Each pair of boundary components produced by the cut will consists of closed curves representing the homology classes $Y$ and $\phi_*(Y)$ near the top and bottom of the cylinder, respectively, and their front projections will form a Type $1$ pair.  When cutting yields a single boundary component, it will have the form $\big(\gamma\times (1-\epsilon)\big) \cup \big(\phi(\gamma)\times \epsilon\big) \cup \big( \{p, q\} \times [0,1] \big)$, where $\gamma$ is an arc connecting points $p$ and $q$ near $\partial \Sigma$.  The front projection of such a curve will appear as a pair of Type 2 curves on the Morse diagram.

Compute the total writhe as above, now using  the teleporting endpoints of $\mathcal{F}(X\cup \Lambda)$ to assign multiplicities to intervals of the trace curves.

\begin{defn} For $\Lambda$ nullhomologous in the mapping torus $M(\Sigma, \phi)$ and $\mathcal{F}(X)$ as above, let $P$ and $N$ denote the counts of positive and negative crossings between $\mathcal{F}(\Lambda)$ and itself, while $T$ denotes the the sum of signed crossings between $\mathcal{F}(\Lambda)$ and the trace curves weighted by the multiplicities induced by $\mathcal{F}(\Lambda\cup X)$.  Then the \textit{total writhe} of $\mathcal{F}(\Lambda\cup X)$ is the sum 
\[ \text{W}\big(\mathcal{F}(\Lambda\cup X)\big)=P-N+T.\]
\end{defn}

\begin{lemma}\label{lem:writhe} The total writhe of $\mathcal{F}(\Lambda \cup X)$ is independent of the choice of $X$.
\end{lemma}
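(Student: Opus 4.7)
The plan is to show that $T$ is independent of the choice of $X$; the counts $P$ and $N$ involve only self-crossings of $\mathcal{F}(\Lambda)$ and are manifestly independent of $X$.

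First I would exploit the fact that the rules defining the multiplicity function $m$ on each trace-curve interval---the base value $m=0$ at the bottom, the $\pm 1$ jumps at teleporting endpoints, the summation rule at handle-slide trivalent points, and the opposite-sign constraint on paired trace curves---are all linear in the underlying set of teleporting endpoints of $\mathcal{F}(\Lambda\cup X)$. Since this set partitions disjointly into contributions from $\mathcal{F}(\Lambda)$ and from $\mathcal{F}(X)$, linearity yields a canonical decomposition $m = m^\Lambda + m^X$, and hence
\[T = T_\Lambda + T_X, \qquad T_X = \sum_{c} \mathrm{sgn}(c)\,m^X(c),\]
where $c$ runs over crossings of $\mathcal{F}(\Lambda)$ with the upward-oriented trace curves. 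Since $T_\Lambda$ depends only on $\mathcal{F}(\Lambda)$, the problem reduces to showing that $T_X$ depends only on the (forced) homology class $-[\pi(\Lambda)]\in H_1(\Sigma)$ carried by any valid $X$.

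Next I would interpret $T_X$ as an algebraic intersection number in $\Sigma\times[0,1]$. The teleporting endpoints of $\mathcal{F}(X)$ are in bijection with signed intersections of $X$ with the cores of handles in successive pages, so dually the multiplicity $m^X$ at height $t_0$ on a trace curve equals the algebraic count of co-core intersections, over the slab $\Sigma\times[0,t_0]$, of a $2$-chain $F^X\subset\Sigma\times[0,1]$ cobounding $X$ with a chosen reference cycle in $\Sigma_0$. Summing $\sum_c \mathrm{sgn}(c)\,m^X(c)$ then matches the algebraic count $\Lambda\cdot F^X$ of intersections of $\Lambda$ with $F^X$. To conclude invariance I would use that, after move B1, $\Lambda$ lies in $\Sigma\times[\epsilon,1-\epsilon]$ away from the binding, while every Type~1 or Type~2 component of $X$ lies in $\Sigma\times([0,\epsilon]\cup[1-\epsilon,1])$ together with a narrow tube near the binding. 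Thus $\Lambda$ and $X$ are strictly separated by level surfaces and a tubular neighborhood of the binding, so $F^X$ can be taken in this exterior region; two chains $F^X,F^{X'}$ built this way differ by a $2$-chain supported there, and consequently $\Lambda\cdot(F^X-F^{X'})=0$, giving $T_X=T_{X'}$.

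The hard part will be making the identification $T_X = \Lambda\cdot F^X$ precise, particularly across handle-slide $t$-values where the multiplicity rule is combinatorial rather than geometric. I anticipate constructing $F^X$ inductively page-by-page from the standard form of Type~1 and Type~2 curves, then matching the summation rule at handle slides to the cobordism between adjacent handle decompositions of the page (which is exactly what handle slides in the Morse diagram record). As a fallback, one can verify invariance directly by showing that any two valid choices $X,X'$ are connected by a sequence of homology-class-preserving elementary moves---translating a single Type~1 or Type~2 component, exchanging between equivalent Type~1/Type~2 representatives of the same class, or creating and canceling a trivial pair---and checking in each local model that $\sum_c \mathrm{sgn}(c)\,m^X(c)$ is unchanged.
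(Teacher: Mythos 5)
Your reduction to the term $T$ and the linear splitting $m=m^{\Lambda}+m^{X}$ is fine, and interpreting $T_X$ as an intersection number of $\Lambda$ with a $2$--chain cobounding $X$ is very much in the spirit of the paper's own topological reading of the multiplicities (via Seifert surfaces and $\partial\widetilde{M}$). The gap is in your last step, the separation argument. It is not true in general that $F^{X}$, or even the difference chain for $X-X'$, can be confined to the region $\Sigma\times([0,\epsilon]\cup[1-\epsilon,1])$ together with a collar of the binding. Two admissible choices of $X$ are only constrained to have the same label sum $-[\pi(\Lambda)]$, and this allows them to differ, e.g., by replacing a Type~1 pair built from a curve $\beta$ by one built from $\beta'$ with $d:=[\beta]-[\beta']$ a nonzero class in $\ker(\phi_*-\mathrm{id})$. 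Then $X-X'$ represents $d$ in the top slab and $-d$ in the bottom slab; since the ``exterior region'' of the cylinder deformation retracts to the double of $\Sigma$, the class $(d,-d)$ is generally nonzero there, so no bounding $2$--chain for $X-X'$ avoids the middle region where $\Lambda$ lives. Consequently $\Lambda\cdot(F^{X}-F^{X'})=0$ does not follow from disjointness of supports; any chain bounding $X-X'$ must sweep a cylinder $Z\times[\epsilon,1-\epsilon]$ with $[Z]=d$ through the middle, and the intersection with $\Lambda$ is $\pm\, d\cdot[\pi(\Lambda)]$.

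That number does vanish, but for a reason your argument never invokes: $[\pi(\Lambda)]\in\mathrm{im}(\phi_*-\mathrm{id})$, $d\in\ker(\phi_*-\mathrm{id})$, and $\phi_*$ preserves the intersection pairing on $H_1(\Sigma)$, so $d\cdot(\phi_*-\mathrm{id})y=\phi_*d\cdot\phi_*y-d\cdot y=0$. Equivalently, and this is the paper's route, one should work in $M$ rather than in the cylinder: after isotoping the Type~2 pieces of $X_1-X_2$ across the binding and performing saddle resolutions along $\Sigma_0$ (moves chosen so as not to change the relevant intersection counts), the difference bounds a surface in a neighborhood of $\Sigma_0\cup N(B)$, i.e., the top and bottom contributions cancel only after the monodromy gluing identifies them; that surface is disjoint from $\Lambda$, so the multiplicities in the middle region, and hence $T$, are unchanged. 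So your proposal needs either this homological cancellation made explicit or the paper's across-the-binding modification; your fallback via ``elementary moves'' would also have to include precisely the non-local move replacing $\beta$ by $\beta+d$ with $d$ monodromy-invariant, which is where the actual content lies.
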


It follows that the total writhe is in fact an invariant of the original front projection $\mathcal{F}(\Lambda)$, and consequently, we write $W(\mathcal{F}(\Lambda))$ instead.  The proof is at the end of the section. With this notation in hand, we recall the formula for computing the Thurston-Bennequin number stated in the Introduction:

\begin{thm1.6}
Let $\Lambda$ be a nullhomologous Legendrian knot. Then the Thurston-Bennequin number of $\Lambda$ is computed by
\[ \text{tb}(\Lambda)= \text{W}\big(\mathcal{F}(\Lambda)\big)-\frac{1}{2}|\text{cusps}|
.\]
\end{thm1.6}

\begin{example} The Morse diagram on the right in Figure~\ref{fig:Xcurves} describes an open book with annular pages whose monodromy is a single left-handed Dehn twist about the core: an overtwisted $S^3$.  In fact, $\Lambda$ is the boundary of an overtwisted disc.  There is a single negative crossing between $\Lambda$ and the trace curve on the first component of the diagram ---recall that the trace curve is the undercrossing strand.  However, the multiplicity of this interval of the trace curve is $-1$, as a consequence of the Type 2 pair, so the contribution to the total writhe is $T=1$.  Adding this to half the number of cusps computes that the Thurston-Bennequin number of $\Lambda$ is $0$.
\end{example}

\subsection{Seifert surfaces I}\label{sec:ss1}

  In order to prove Theorem~\ref{thm:tb}, we will need to construct a Seifert surface $S$ for $\Lambda$ and count the signed intersections between $S$ and a vertical translation $\Lambda'$ of $\Lambda$.  When $\Lambda$ is nullhomologous in the cylinder $\Sigma \times [0,1]$, the procedure for constructing a Seifert surface is somewhat simpler than in the case that $\Lambda$ is nullhomologous only in the mapping torus $M(\Sigma,\phi)$.  In this section, we assume that $[\pi(\Lambda)]=0\in H_1(\Sigma)$, and we defer the latter case to Section~\ref{sec:ss2}.

  Our approach mimics the classical construction in $\mathbb{R}^3$, with some extra  care required to deal with the fact that the front projection of $\Lambda$ will  generally not be an immersed loop.  A summary of our approach is as follows: cutting $M$ along its skeleton separates $\Lambda$ into properly embedded arcs.  We  connect the endpoints of these arcs via new segments lying on the boundary of    $\widetilde{M}:=\overline{M\setminus \text{Skel}}$; see the discussion at the end of Section~\ref{sec:contacto}). These segments are chosen to appear  in pairs which cancel when $\widetilde{M}$ is mapped back into $M$.  It thus suffices to build a Seifert surface for the link in $\widetilde{M}$.

\textbf{Step 1:}   Recall that $\widetilde{M}$ is a compactification of $M\setminus \text{Skel}$, so our link has a natural preimage in $\widetilde{M}$ consisting of properly embedded arcs.  In this step, with a little care, we will connect the endpoints of these arcs by additional segments in $\partial \widetilde{M}$.

Front projection collapses surfaces in $\partial \widetilde{M}$ to intervals, so as an aid to visualization, we replace each trace curve $T$ by a ribbon $T\times [-1,1]$.  Strictly speaking, the result is no longer a front projection, but the honest front projection may be recovered by projecting each ribbon to $T\times \{0\}$.

Recall that the definition of the total writhe involved first decomposing the trace curves into intervals bounded by teleporting endpoints and trivalent points of the trace pattern.  

\begin{lemma}\label{lem:intnumb} 
On each connected component of the trace pattern, the sum of signs of the teleporting endpoints is $0$
\end{lemma}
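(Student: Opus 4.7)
My plan is to observe that the teleporting endpoints come in sign-cancelling pairs, each pair lying in a single connected component of the trace pattern, so that the signed sum on each component vanishes automatically.

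First, by Proposition~\ref{prop:frontproj}, each teleporting endpoint of $\mathcal{F}(\Lambda)$ corresponds to a transverse intersection of $\Lambda$ with a $2$-cell of $\mathrm{Skel}$. Conversely, each such intersection produces exactly two teleporting endpoints of $\mathcal{F}(\Lambda)$: one on each of the two paired trace curves at the ends of the flow-out from the intersected $2$-cell of $\mathrm{Skel}$. This correspondence is already implicit in the proof of Proposition~\ref{prop:frontproj}, which uses the standard front-projection picture in $(\mathbb{R}^3,\xi_{\mathrm{std}})$ on each component of $M\setminus (\mathrm{Skel}\cup B)$ and notes that an arc crossing the ``$x=0$'' plane teleports to its image on the paired trace curve.

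Next, I would fix the orientation on $\Lambda$ and read off signs. The arc of $\Lambda$ immediately preceding the intersection with $\mathrm{Skel}$ flows under $\pm V$ to one paired trace curve, arriving there with its oriented tangent pointing \emph{into} the trace curve, so the endpoint carries sign $+$ per the convention of Figure~\ref{fig:signs}. The arc immediately after the intersection flows to the other paired trace curve, leaving with its oriented tangent pointing \emph{out} of the trace curve, so the endpoint carries sign $-$. Thus each transverse $\Lambda$-skeleton intersection contributes exactly one $(+,-)$ pair, distributed across the two paired trace curves.

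Finally, I would invoke the fact that paired trace curves always lie in a common connected component of the trace pattern. Each $\Lambda$-skeleton intersection therefore contributes $+1+(-1)=0$ to the signed sum on its connected component, and summing over all intersections of $\Lambda$ with $\mathrm{Skel}$ gives a total of $0$ on each component, as claimed.

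The one step requiring care is the connectivity assertion in the third paragraph. When the relevant paired trace curves are joined via a sequence of handle-slide trivalent vertices (as in the middle diagram of Figure~\ref{fig:ex1}), this is immediate from Definition~\ref{def:abstractmd}. When a pair participates in no handle slide, so that the two paired curves close up into two parallel loops on a single torus, the claim is understood under the convention that the pairing data is part of the trace pattern's combinatorial structure; equivalently, one can bypass any such convention by appealing to the hypothesis that (after adjoining the auxiliary link $X$ if necessary) $\pi(\Lambda)$ is nullhomologous in $\Sigma\times[0,1]$, so that the signed $\Lambda$-count against the relevant skeleton $2$-cell, which computes precisely the sum of signs on the corresponding single-pair component, is zero.
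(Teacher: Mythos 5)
Your key structural claim---that paired trace curves always lie in a common connected component of the trace pattern---is false in general, and your argument leans on it. The two curves of a pair are the two boundary points of a co-core swept around in $t$; when that co-core meets two different boundary components of the page they lie on two \emph{different tori} of the Morse diagram (this is exactly the situation in the annular-page examples of Figure~\ref{fig:ex1}, e.g.\ $L(2,1)$ and the overtwisted $S^3$), and even on one torus an un-slid pair is just two disjoint circles, hence two components of the graph. Your patch covers only the case of a pair involved in no handle slide, and even there it misdescribes the geometry (``two parallel loops on a single torus'') and, worse, the proposed ``convention'' of building the pairing into connectivity changes the statement: the lemma is used in Step~1 of the Seifert-surface construction precisely to close up segments inside the ribbon neighborhoods of an actual graph component, so per-pairing-class vanishing would not suffice. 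There is also an untreated middle case that your dichotomy misses: by Definition~\ref{def:abstractmd}, when handle $B$ slides over handle $A$ it is one of the $A$-curves that teleports across the two $B$-curves, so the \emph{sliding} handle's pair $B_1,B_2$ becomes connected, but the slid-over handle's partner curve $A_2$ may sit in a different component from $A_1$. For such a curve your pair-cancellation mechanism is unavailable, and your fallback (``the signed count of $\Lambda$ against the skeleton $2$-cell is zero'') is only justified when that $2$-cell, the swept descending manifold, is a \emph{closed} cycle---which holds exactly when the handle never slides over anything, a point you neither state nor use.

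A correct argument along your lines exists, but it requires exactly this dichotomy: (i) for each pair the two endpoint sums cancel identically, and (ii) if a handle never slides over another handle, its swept descending manifold is a closed $2$-cycle, so nullhomology of $\Lambda$ in $\Sigma\times[0,1]$ forces each of its two curves' sums to vanish individually; since a handle that does slide has its two curves joined inside one component, every component sums to zero. The paper avoids all of this bookkeeping by arguing homologically in one stroke: it takes a Seifert surface for $\Lambda$ in $\Sigma\times[0,1]$, passes to its preimage in $\widetilde{M}$, and reads off arcs on $\partial\widetilde{M}$ joining opposite-sign teleporting endpoints along each trace-curve component, which simultaneously proves Lemma~\ref{lem:intnumb} and the existence of the multiplicity function. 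As written, your proposal has a genuine gap at the connectivity step; either adopt the dichotomy above or the paper's Seifert-surface argument to close it.
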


The proof is deferred until the end of the section.

We assigned each interval $T_i$ of $T$  a multiplicity $m_i(T)$, and we place $m_i(T)$ disjoint positively oriented segments in the ribbon neighborhood of $T_i$. According to the lemma, the endpoints of these segments can be connected to each other and to the teleporting endpoints of $\mathcal{F}(\Lambda)$ to yield a collection of  immersed closed curves $\mathcal{F}(\widetilde{\Lambda})$.  We claim that these connections may be chosen so as not to introduce any new crossings into the diagram; if an initial choice of connections creates crossings,  the oriented resolution of these crossings will satisfy the claim.

To complete the construction of $\widetilde{\Lambda}$, lift the added segments to disjoint segments on $\partial \widetilde{M}$ which connect to each other and to the points where $\Lambda_H$ intersects $\partial \widetilde{M}$.  Since the segments appear in oppositely oriented pairs on ribbons associated to paired trace curves, these lifts may be chosen so that the natural map of $\widetilde{M}$ onto $M$ glues cancelling segments.  This ensures that any Seifert surface for $\widetilde{\Lambda}$ in $\widetilde{M}$ will glue to a Seifert surface for $\Lambda$ in $M$, as desired.

\textbf{Step 2:}
Now apply the classical Seifert's Algorithm to $\mathcal{F}(\widetilde{\Lambda})$: take the oriented resolution at each crossing to transform $\mathcal{F}(\widetilde{\Lambda})$ into a collection of disjoint simple closed curves.  

Recall that the original curve $\pi(\Lambda)$ was nullhomologous in $\Sigma_0$.  If it  had nonzero winding number with respect to the index $0$ critical point $c_0$, then some of these curves will also share this property; although nullhomologous, they will not bound a  region on the front projection.

To remedy this, introduce additional simple closed curves to the link whose winding number with respect to the index $0$ critical point is the negative of that of $\Lambda$.  Furthermore, these curves may be chosen to lie sufficiently close to $c_0$ on pages with sufficiently large $t$-coordinate that no new crossings are introduced in the front projection.  The front projection of a curve contained on a fixed page and linking $c_0$ will appear on the front projection as a horizontal line, but it is important to note that it teleports at each trace curve, rather than crossing it.  After introducing these new components, the curves in the resolved front projection will bound discs and annuli.

Next, lift these to discs and annuli in $\widetilde{M}$.  This must be done with some care;  in the case of nested cycles $C_1$ inside $C_2$, note that the interior of the disc bounded by $C_2$ should be assumed to lie closer to the binding than $C_1$.  This condition is required by the fact that segments of $C_1$ may lie on the boundary of $\widetilde{M}$, preventing the interior of the disc bounded by $C_2$ from being pushed ``behind" $C_1$.  For later use, we further assume that a collar neighborhood of $\Lambda$ in each disc agrees with $\Lambda \times [r_0,r_1]$  for some small interval $[r_0,r_1]$ in the $r$ direction. Finally, reconstruct $\widetilde{\Lambda}$ as the boundary of this surface by gluing in twisted bands to recover the resolved crossings.    Call the result $\widetilde{S}$.

\textbf{Step 3:} Finally, we map $\widetilde{M}$ to $M$, gluing segments of $\partial\widetilde{S}$ identified in the quotient.  To complete the construction of a Seifert surface for our original knot $\Lambda$, we remove the links components added in Step 2 by filling each one in with a disc  lying in the page.  
 This construction of a Seifert surface for $\Lambda_H$ renders the proof of Theorem~\ref{thm:tb} straightforward in the case that $\Lambda$ is nullhomologous in $\Sigma \times [0,1]$.

\begin{proof}[Proof of Theorem~\ref{thm:tb}, Part 1]
Here we address the case when $[\Lambda] = 0 \in H_1(\Sigma \times [0,1])$.
The Thurston-Bennequin number of $\Lambda$ is the linking number between $\Lambda$ and a vertical push-off $\Lambda'$ of $\Lambda$, or equivalently,   the signed intersection number of $\Lambda'$ and a Seifert surface for $\Lambda$.  By construction, this agrees with the intersection number between $\Lambda'$ and  the Seifert surface $\widetilde{S}$ for $\widetilde{\Lambda}$ in $\widetilde{M}$ constructed above.  

Recall that the surface $ \widetilde{S}$ was constructed from discs chosen to have a positive tangent component in the $\partial_r$ direction near $\Lambda$.  This implies immediately that for each pair of left\slash right cusps in $\mathcal{F}(\Lambda)$, there is a single negative intersection point between $\widetilde{S}$ and $\Lambda'$; this is easily seen  using the contactomorphism between standard pieces and a quotient of $(\mathbb{R}^3, \xi_{\text{std}})$.

Twisted bands also contribute intersection points between $\widetilde{S}$ and $\Lambda'$, and the sum of these contributions is the total writhe of the original diagram.  
Each crossing in the original front projection leads to an intersection point whose sign  is most easily computed via the following movie picture:

   \begin{figure}%[h]
\begin{center}
\scalebox{.7}{\includegraphics{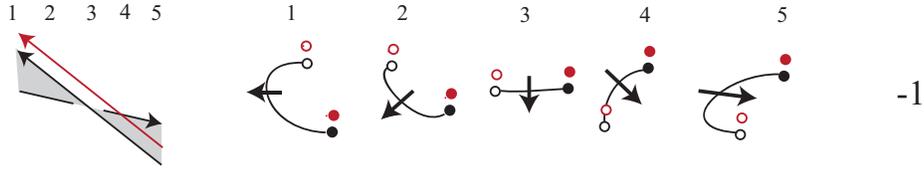}}
\caption{ The right-hand picture shows five $rt$-cross-sections of the twisted band shown on the left.  On each slice, the transverse arrow points in the positive direction of the surface, so that the sign of the intersection is $-1$. }\label{fig:bandmovie}
\end{center}
\end{figure}

Furthermore, each crossing between a curve of $\mathcal{F}(\Lambda)$ and a curve of $\mathcal{F}(\widetilde{\Lambda})\setminus \mathcal{F}(\Lambda)$ contributes an intersection point; note that the latter will always be the under-strand. Rotating the under-strand in Figure~\ref{fig:bandmovie} counterclockwise to vertical shows that the sign of the intersection is multiplied by $-m$.  

As noted above,  the simple closed curves added in Step 2 do not affect  the intersection number between $\Lambda'$ and $S$.

\end{proof}

\subsection{Seifert surfaces II}\label{sec:ss2}

We now turn our attention to the case of a Legendrian knot $\Lambda$ which is nullhomologous in $M$, but not in $\Sigma\times [0,1]$.  As described above, a Seifert surface for $\Lambda$ may be cut along $\Sigma_0\times N(B)$ to yield a new Seifert surface for a link $\Lambda \cup X$.   The components of the link come in three distinct types, which we have identified as Type 1 pairs, Type 2 components, and $\Lambda$ itself.  
\begin{remark}
These new boundary components are not Legendrian,  but one may nevertheless consider their image on the Morse diagram after flowing by $V$, and thus we continue to use  ``front projection" to describe the resulting curves on the Morse diagram.
\end{remark}

\begin{proof}[Proof of Theorem~\ref{thm:tb}, Part 2]  To conclude the proof, we consider the case  when $[\Lambda] \neq 0 \in H_1(\Sigma \times [0,1])$ but $[\Lambda] = 0 \in H_1(M)$.

Having replaced the original $\Lambda$ by a link which is nullhomologous in $\Sigma\times [0,1]$, we are free to carry out the construction of a Seifert surface, and hence, the computation of the Thurston-Bennequin number, exactly as in Section~\ref{sec:ss1}.  We note  that $\mathcal{F}(\Lambda')$ will be disjoint from any Type $1$ curves and will represent the ``under" strand in any intersections with Type 2 curves, so the introduction of $X$ affects the count of intersections only inasmuch as it changes the multiplicities along the trace curves.  
\end{proof}

\subsection{Proof of Lemmas~\ref{lem:writhe} and \ref{lem:intnumb}}

Recall that Lemma~\ref{lem:writhe} asserted the independence of the total writhe of $\mathcal{F}(\Lambda \cup X)$ from the choice of $X$, while Lemma \ref{lem:intnumb} claimed that the signed sum of teleporting endpoints is zero on each connected component of the trace curves.  In fact, both results will follow easily once the  multiplicities are interpreted topologically.

\begin{proof}[Proof of Lemma~\ref{lem:intnumb}] The multiplicities assigned to intervals of the trace curves were defined combinatorially, but their role in the construction of a Seifert surface makes the definition more transparent.  The construction described above introduces the oriented segments of $\widetilde{\Lambda}\setminus \Lambda$ to $\partial \widetilde{M}$, and the multiplicity of an interval is a signed count of how many segments lie on the part of $\partial \widetilde{M}$ whose neighborhood flows to the trace curve.  The sign comes from the fact that each added segment connects a pair of endpoints of $\Lambda$ which intersect $\partial \widetilde{M}$ with opposite signs and at different $t$-values, so the front projection of this oriented curve points in either the positive or negative $t$ direction.

Lemma~\ref{lem:intnumb} is equivalent to the statement that the endpoints with opposite signs may be joined by such arcs.  In particular, it is clear that this count depends only on $\widetilde{\Lambda}$ and not on its extension to $\widetilde\Lambda$.  Since $\Lambda$ is nullhomologous in $\Sigma\times[0,1]$, there exists some Seifert surface for $\Lambda$, and its pre-image in $\widetilde{\Lambda}$ defines an extension $\widetilde{\Lambda}$ which suffices to ensure the existence of the multiplicity function.
\end{proof}

\begin{proof}[Proof of Lemma~\ref{lem:writhe}] 

To show that the total writhe is independent of the choice of $X$ extending $\Lambda$, suppose that $X_1$ and $X_2$ are two such choices. We may modify the links, preserving intersection numbers between $\mathcal{F}(X_i)$ and the trace curves, so that $X_1-X_2$ consists of a pair of nullhomologous links contained, respectively, in $\Sigma\times (0,\epsilon]$ and  $\Sigma \times[1-\epsilon, 1)$.  We will prove this claim below; in the case where the $X_i$ consist only of Type 1 components, no modification is necessary. 

Assuming the claim, we note that each link contained in the product $\Sigma\times(a,b)$ has a Seifert surface which may also be assumed to lie in the product $\Sigma\times(a,b)$. Thus the signed count of intersections between this Seifert surface and $\partial \widetilde{M}$ is $0$ near the original $\Lambda$, and thus the multiplicities contributing to the total writhe of $\Lambda$ are unaffected.

We conclude by describing how to modify the Type 2 components of $X_1-X_2$ so that they are disjoint from $\Sigma_0$.  As a first step, isotope each Type 2 component across the binding so that it lies in a neighborhood of $\Sigma_0$ and intersects it twice.  Since the collection is nullhomologous, the signed intersection number with $\Sigma_0$ is $0$, so we may perform saddle resolutions to remove all the intersections with $\Sigma_0$ without altering the intersections between the front projection and the trace curves.  The resulting modified link satisfies the claim.

\end{proof}

\bibliographystyle{amsplain} 
\bibliography{unifronts}

\end{document}